\newtheorem{theorem}{Theorem}
\newtheorem{lemma}[theorem]{Lemma}
\newtheorem{proposition}[theorem]{Proposition}
\newtheorem{conjecture}[theorem]{Conjecture}
\font\sc=rsfs10
\newcommand{\cC}{\sc\mbox{C}\hspace{1.0pt}}
\newcommand{\cP}{\sc\mbox{P}\hspace{1.0pt}}
\font\scc=rsfs7
\newcommand{\ccC}{\scc\mbox{C}\hspace{1.0pt}}
\begin{document}

\title[Parabolic projective functors in type $A$]
{Parabolic projective functors in type $A$}
\author{Tobias Kildetoft and Volodymyr Mazorchuk}

\begin{abstract}
We classify projective functors on the regular block of 
Rocha-Caridi's parabolic version of the BGG category $\mathcal{O}$
in type $A$. In fact, we show that, in type $A$, the restriction of an
indecomposable projective functor from $\mathcal{O}$ to the
parabolic category is either indecomposable or zero.
As a consequence, we obtain that projective functors on 
the parabolic category $\mathcal{O}$ in type $A$ are
completely determined, up to isomorphism, by the linear 
transformations they induce on the level of the 
Grothendieck group, which was  conjectured by Stroppel in \cite{St}.
\end{abstract}

\maketitle

\section{Introduction and description of the results}\label{s0}

Category $\mathcal{O}$ associated to a fixed triangular decomposition 
$\mathfrak{g}=\mathfrak{n}_-\oplus \mathfrak{h}\oplus\mathfrak{n}_+$ of a
semi-simple complex finite dimensional Lie algebra $\mathfrak{g}$ was 
introduced in \cite{BGG}. For each parabolic subalgebra $\mathfrak{p}$ of
$\mathfrak{g}$ containing $\mathfrak{h}\oplus\mathfrak{n}_+$, there is a parabolic
version $\mathcal{O}^{\mathfrak{p}}$ of $\mathcal{O}$ introduced in \cite{RC}.
An important role in understanding the structure of both $\mathcal{O}$ and
$\mathcal{O}^{\mathfrak{p}}$ is played by the so-called {\em projective functors},
that is endofunctors of these categories isomorphic to direct summands of 
tensoring with finite dimensional $\mathfrak{g}$-modules. Indecomposable 
projective functors on $\mathcal{O}$ were classified, in terms of the
action of the Weyl group $W$ of $\mathfrak{g}$ on $\mathfrak{h}^*$, in \cite{BG}. 

Let $\mathcal{O}_0$ denote the principal block of $\mathcal{O}$, that is the
indecomposable direct summand of $\mathcal{O}$ containing the trivial $\mathfrak{g}$-module. 
Formulated in modern terms, the main result of \cite{BG} asserts that the
action of projective functors on $\mathcal{O}_0$ categorifies, using the 
Grothendieck group decategorification, the right regular representation
of $W$, see \cite[Lecture~5]{Ma} for details. In particular, isomorphism classes
of indecomposable projective functors on $\mathcal{O}_0$ turn out to be in a 
natural bijection with elements in $W$ and have a nice combinatorial description
in terms of Kazhdan-Lusztig combinatorics from \cite{KaLu}, see \cite[Lecture~7]{Ma} for details. 

In the case of a Lie algebra of type $A$, the action of projective functors on 
$\mathcal{O}$ and, especially, on $\mathcal{O}^{\mathfrak{p}}$ for a {\em maximal}
parabolic subalgebra $\mathfrak{p}$ plays a crucial role in the category 
$\mathcal{O}$ reformulation, given in \cite{St}, of Khovanov homology for oriented links,
originally defined in \cite{Kh}. In particular, the paper \cite{St} establishes the 
following two properties for projective functors on $\mathcal{O}^{\mathfrak{p}}$ for
a {\bf maximal parabolic} subalgebra $\mathfrak{p}$ in type $A$:
\begin{itemize}
\item The restriction of an indecomposable projective functor from $\mathcal{O}_0$ to
$\mathcal{O}_0^{\mathfrak{p}}$ is either indecomposable or zero, see \cite[Theorem~5.1]{St}.
\item A projective functor on $\mathcal{O}_0^{\mathfrak{p}}$ is completely determined, up to
isomorphism, by the linear  transformation it induces on the level of the 
Grothendieck group, see \cite[Theorem~5.7]{St}.
\end{itemize}
Moreover, it is conjectured in \cite[Conjecture~3.3]{St} that the second property should
hold in the general case (note that the first property fails outside
type $A$, see, for example,  \cite[Example~3.7(c)]{St}). The action of projective functors 
on arbitrary parabolic categories in type $A$ is used for
categorification of other quantum link invariants, see \cite{MS3}.
The aim of the present paper is to prove \cite[Conjecture~3.3]{St} for any parabolic
category $\mathcal{O}_0^{\mathfrak{p}}$, not necessarily a maximal one, in type $A$. 
Our main result is the following theorem.

\begin{theorem}\label{thm1}
Let $\mathfrak{g}=\mathfrak{sl}_n(\mathbb{C})$ and $\mathfrak{p}$ be any parabolic subalgebra of 
$\mathfrak{g}$ containing $\mathfrak{h}\oplus\mathfrak{n}_+$. Then we have the following: 
\begin{enumerate}[$($i$)$]
\item\label{thm1.1} The restriction of an indecomposable projective functor from 
$\mathcal{O}_0$ to $\mathcal{O}_0^{\mathfrak{p}}$ is either indecomposable or zero.
\item\label{thm1.2} A projective functor on $\mathcal{O}_0^{\mathfrak{p}}$ is completely determined, 
up to isomorphism, by the linear  transformation it induces on the level of the 
Grothendieck group. 
\end{enumerate}
\end{theorem}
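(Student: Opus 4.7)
The plan is to deduce part~(\ref{thm1.2}) from part~(\ref{thm1.1}) by general categorification arguments, and then to attack~(\ref{thm1.1}) via a surjectivity-of-endomorphisms argument, inducting on the number of maximal parabolics needed to cut out $\mathfrak{p}$ and taking \cite[Theorem~5.1]{St} as the base case.

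\emph{Reduction of (\ref{thm1.2}) to (\ref{thm1.1}).} Granting~(\ref{thm1.1}), every indecomposable projective functor on $\mathcal{O}_0^{\mathfrak{p}}$ is of the form $\bar\theta_w:=\theta_w|_{\mathcal{O}_0^{\mathfrak{p}}}$ for some $w\in W$; equivalently, every projective functor on $\mathcal{O}_0^{\mathfrak{p}}$ is (via idempotent lifting) a summand of a restriction from $\mathcal{O}_0$. Writing a projective functor $F$ on $\mathcal{O}_0^{\mathfrak{p}}$ as a direct sum of $\bar\theta_w$'s with multiplicities, the isomorphism class of $F$ is pinned down by these multiplicities. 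In type~$A$ the Grothendieck classes of pairwise non-isomorphic nonzero $\bar\theta_w$'s are linearly independent, since, after identifying $K_0(\mathcal{O}_0^{\mathfrak{p}})$ with the parabolic module, they correspond to elements of the parabolic Kazhdan--Lusztig basis; hence the multiplicities, and therefore $F$ itself, are recovered from $[F]$.

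\emph{Proof of (\ref{thm1.1}).} Let $\theta_w$ be an indecomposable projective functor on $\mathcal{O}_0$. The restriction induces a ring homomorphism
$$\rho\colon\mathrm{End}(\theta_w)\longrightarrow \mathrm{End}(\bar\theta_w).$$
Since $\mathrm{End}(\theta_w)$ is local, its image under $\rho$ is either zero or local. If $\rho$ is surjective, then $\mathrm{End}(\bar\theta_w)$ is zero or local, forcing $\bar\theta_w$ to be zero or indecomposable. The goal thus reduces to showing that $\rho$ is surjective. Writing $\mathfrak{p}=\mathfrak{p}_1\cap\cdots\cap\mathfrak{p}_k$ as an intersection of maximal parabolics, one has $\mathcal{O}_0^{\mathfrak{p}}=\bigcap_i\mathcal{O}_0^{\mathfrak{p}_i}$, and I intend to induct on~$k$, with base case $k=1$ handled by \cite[Theorem~5.1]{St} together with a short argument upgrading its conclusion to surjectivity of~$\rho$ in the maximal parabolic setting.

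The main obstacle will be the inductive step: propagating surjectivity of~$\rho$ through the intersection of two maximal parabolic subcategories, a situation not directly addressed in~\cite{St}. I expect the essential type-$A$-specific input to be the compatibility of Kazhdan--Lusztig cells in $W=S_n$ with parabolic (Young) subgroups, encoded by the Robinson--Schensted correspondence, together with Soergel's functor, which translates morphism spaces between projective functors into spaces of bimodule morphisms controlled by this cell combinatorics. This translation should reduce the inductive step to a combinatorial statement about singular Soergel bimodules in type~$A$, where the required surjectivity can be checked directly; the failure of the corresponding combinatorial statement outside type~$A$ would be what obstructs the restriction-is-indecomposable property in other types, consistent with \cite[Example~3.7(c)]{St}.
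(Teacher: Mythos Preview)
Your proposal is a plan rather than a proof, and both parts have genuine gaps.

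\textbf{Part~(\ref{thm1.1}).} The paper's argument is entirely different from yours: it does not use induction on the number of maximal parabolics, nor surjectivity of the restriction map on endomorphism rings, nor Soergel bimodules. Instead it invokes the classification of simple transitive $2$-representations of the fiat $2$-category $\cP$ in type~$A$ (Theorem~\ref{thm3}). The idea is: if $\theta_x|_{\mathcal{O}_0^{\mathfrak{p}}}$ splits off a nontrivial summand $F_x$, pick a simple $L(z)$ with $F_xL(z)\neq 0$ and $\mathbf{a}(z)$ minimal. A graded-length estimate (via \cite[Proposition~1]{Ma1}) forces $\ell\ell(F_xL(z))<2\mathbf{a}(z)+1$. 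But the simple transitive subquotient of the $\cP$-action generated by $F_xL(z)$ must be a cell $2$-representation, whose indecomposable objects all have Loewy length exactly $2\mathbf{a}(\mathcal{R})+1$; comparing $\mathbf{a}$-values yields a contradiction.

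Your approach hinges on surjectivity of $\rho:\mathrm{End}(\theta_w)\to\mathrm{End}(\bar\theta_w)$, which is strictly stronger than indecomposability and which you do not establish even in the base case: Stroppel's \cite[Theorem~5.1]{St} gives only that $\mathrm{End}(\bar\theta_w)$ is local, not that $\rho$ surjects. There is no ``short argument'' supplied for this upgrade, and related surjectivity statements are known to fail even in type~$A$ (cf.\ Subsections~\ref{s4.7}--\ref{s4.8}). The inductive step is likewise only a wish (``I expect\dots should reduce\dots''), with no concrete mechanism explaining how surjectivity for $\mathfrak{p}_1,\dots,\mathfrak{p}_{k-1}$ and $\mathfrak{p}_k$ separately yields surjectivity for their intersection.

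\textbf{Part~(\ref{thm1.2}).} Your reduction to linear independence of the operators $[\bar\theta_w]$ on $K_0(\mathcal{O}_0^{\mathfrak{p}})$ is correct, but the justification is not: the $[\bar\theta_w]$ are \emph{endomorphisms} of the parabolic module, not elements of it, so they do not literally ``correspond to parabolic KL basis elements''. Evaluating at $[\Delta^{\mathfrak{p}}(e)]$ controls only the $w\in W^{\mathfrak{p}}$, whereas $\bar\theta_w$ can be nonzero for many $w\notin W^{\mathfrak{p}}$. The paper handles this by reducing, cell by cell, to nondegeneracy of the Cartan matrix of the endomorphism algebra $\mathtt{Q}_{\mathbf{R}'}$ of projective--injectives in a suitable $\mathcal{O}_0^{\hat{\mathbf{R}}'}$, and then proving $\mathtt{Q}_{\mathbf{R}'}$ is cellular (via \cite{KX0}) so that \cite[Proposition~1.2]{KX} applies. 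This step is genuinely nontrivial and is missing from your outline.
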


For an explicit description of which indecomposable projective functors survive restriction 
from $\mathcal{O}_0$ to $\mathcal{O}_0^{\mathfrak{p}}$,  given in terms of 
Kazhdan-Lusztig combinatorics, we refer the reader to  Formula~\eqref{eq1}.

The approach to prove \cite[Theorem~5.1]{St} and \cite[Theorem~5.7]{St} in \cite{St} heavily relies
on the relation of  indecomposable projective functors which survive restriction from
$\mathcal{O}_0$ to $\mathcal{O}_0^{\mathfrak{p}}$, for maximal $\mathfrak{p}$, 
to braid avoiding permutations. This is, clearly,
not available for the general case. In fact, our approach to prove Theorem~\ref{thm1}\eqref{thm1.1} is 
completely different and is crucially based on several advances in the abstract $2$-representation theory of
finitary $2$-categories which were made in the series \cite{MM1,MM2,MM3,MM4,MM5,MM6} of papers
by Vanessa Miemietz and the second author. For Theorem~\ref{thm1}\eqref{thm1.2} we also use a
result of Steffen K{\"o}nig and Changchang Xi from \cite{KX} which asserts that the 
Cartan determinant of a cellular algebra is non-zero.

The paper is organized as follows. In Section~\ref{s1} we collect all necessary preliminaries from
the Lie algebra side of the story and then in Section~\ref{s2} we collect all necessary preliminaries from
the $2$-representation side.  Theorem~\ref{thm1} is proved in Section~\ref{s3}. The final Section~\ref{s4}
contains various speculations related to one of the original approaches to prove Theorem~\ref{thm1} which
did not work. This approach was based on an attempt to first prove the following:

\begin{conjecture}\label{conjecture2}
For $\mathfrak{g}=\mathfrak{sl}_n(\mathbb{C})$, let $L\in \mathcal{O}_0$ be simple and $\theta$ be
an indecomposable projective functor on $\mathcal{O}_0$. Then $\theta\, L$ is either an indecomposable module
or zero.
\end{conjecture}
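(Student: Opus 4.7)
The plan is to reduce Conjecture~\ref{conjecture2} to showing that $\theta_x L_w$ has simple socle (equivalently, by the duality on $\mathcal{O}_0$, simple head) whenever it is nonzero, since simple socle immediately forces indecomposability. Using the standard biadjunction between $\theta_x$ and $\theta_{x^{-1}}$ on $\mathcal{O}_0$, one has
\[
\operatorname{Hom}_{\mathcal{O}_0}(L_y, \theta_x L_w) \cong \operatorname{Hom}_{\mathcal{O}_0}(\theta_{x^{-1}} L_y, L_w),
\]
so computing the socle of $\theta_x L_w$ amounts to determining for which simples $L_y$ the module $\theta_{x^{-1}} L_y$ has $L_w$ in its head, and with what multiplicity. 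The target is to prove this is a uniqueness-and-multiplicity-one statement.

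The natural framework is the $2$-representation theory used in the main body of the paper. The principal $2$-representation of the $2$-category of projective functors has simple objects $L_w$, each with an associated two-sided (apex) cell. For $x$ in the appropriate right cell, the general theory of cell $2$-representations from \cite{MM3,MM5} produces a canonical simple ``cell top'' of $\theta_x L_w$; in type $A$ this top can be identified via RSK and Kazhdan--Lusztig combinatorics. So my steps would be: (i) use cell theory plus type $A$ RSK to pin down exactly which pairs $(x,w)$ give $\theta_x L_w \neq 0$; (ii) identify a single combinatorially-distinguished candidate $L_z$ for the head of $\theta_x L_w$; (iii) upgrade the ``simple top at the cell-quotient level'' conclusion to ``simple head in $\mathcal{O}_0$ itself''; (iv) apply the biadjunction to deduce simple socle and conclude indecomposability.

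The main obstacle is step (iii). Even granting that the cell $2$-representation provides a unique top at the cell level, the composition factors of $\theta_x L_w$ coming from strictly lower two-sided cells in the two-sided order can, a priori, contribute further simple summands either in the head or in the socle of $\theta_x L_w$. Ruling them out requires either vanishing of specific $\operatorname{Ext}^1$-groups between simples of incomparable cells, or a delicate positivity and cancellation statement among Kazhdan--Lusztig structure constants that is not immediately visible from cell theory. Since the present paper merely states the conjecture and the authors note that this is an approach which ``did not work'', I would expect precisely this lower-cell contribution to be where a general argument breaks down; the parabolic setting of Theorem~\ref{thm1} is better behaved because Zuckerman's parabolic projection annihilates many of these troublesome smaller-cell contributions, allowing the analogous statement there to be proved by entirely different, $2$-representation-theoretic means.
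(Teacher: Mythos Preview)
Your proposal is not a proof, and you correctly recognize this yourself: the statement is Conjecture~\ref{conjecture2}, and the paper does \emph{not} prove it. There is therefore no ``paper's own proof'' to compare against. What the paper offers in Section~\ref{s4} is a collection of partial results and heuristics, and your task here seems to have been to sketch an approach and diagnose where it fails---which you do reasonably.

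Your outlined strategy (simple head/socle via the biadjunction $\theta_x \dashv \theta_{x^{-1}}$, combined with cell $2$-representation theory) is a natural one, and your identification of the obstacle in step~(iii) is on the mark: controlling the contributions from strictly smaller two-sided cells is exactly the difficulty. The paper's own discussion in Subsections~\ref{s4.7}--\ref{s4.8} locates the obstruction in a closely related but more structural place, namely the failure of the double centralizer property for $\mathcal{O}_0^{\hat{\mathbf{R}}}$ (equivalently, the negative answer to Kostant's problem for certain Duflo involutions, already in $\mathfrak{sl}_4$). When the double centralizer property holds, the restriction map on endomorphism algebras is an isomorphism and indecomposability transfers; when it fails, one must show the ``extra'' endomorphisms live in strictly positive degree, which is the speculation of Subsection~\ref{s4.8}.

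It is worth noting that the paper's partial results take routes quite different from yours: Proposition~\ref{prop21} (the case $x=w_0^{\mathfrak{p}}$) uses that translation to a wall sends simples to simples, Proposition~\ref{prop22} (the case $y=d_{\mathfrak{p}}$) uses Stroppel's double centralizer theorem for the parabolic category, and Proposition~\ref{prop23} (the case $y=w_{\mathfrak{p}}$) uses derived twisting functors applied to BGG resolutions. None of these is a cell-theoretic head/socle computation of the kind you sketch; they are case-by-case structural arguments. Your uniform approach is cleaner in principle but, as you say, gets stuck precisely where the paper's authors also got stuck.
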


In Section~\ref{s4} we discuss the evidence we have for the validity of this conjecture and also
approaches that could perhaps be used to prove it.
\vspace{5mm}

\noindent
{\bf Acknowledgment.} This research was done during the postdoctoral stay of the first author
at Uppsala University which was supported by the Knut and Alice Wallenbergs Stiftelse.
The second author is partially supported by the Swedish Research Council.
The second author thanks Catharina Stroppel for very interesting discussions during many 
failed joint attempts to prove Theorem~\ref{thm1} in the past. 
We thank Kevin Colembier for pointing out several typos in the original version of the paper.
Finally, we thank the referee for careful reading of the paper and very useful comments.
\vspace{5mm}

\section{Preliminaries from Lie theory}\label{s1}

\subsection{Generalities}\label{s1.1}

We work over $\mathbb{C}$. For a Lie algebra $\mathfrak{a}$,  we denote by $U(\mathfrak{a})$ 
the universal enveloping algebra of $\mathfrak{a}$.

\subsection{Category $\mathcal{O}$}\label{s1.2}

Let $\mathfrak{g}$ be a finite dimensional semi-simple complex Lie algebra 
with a fixed triangular decomposition $\mathfrak{g}= \mathfrak{n}_-\oplus\mathfrak{h}\oplus\mathfrak{n}_+$
and set $\mathfrak{b}:=\mathfrak{h}\oplus\mathfrak{n}_+$. With this decomposition one associates the corresponding 
{\em BGG category $\mathcal{O}$}, as defined in \cite{BGG}, which is the full subcategory of the
category of all $\mathfrak{g}$-modules consisting of all finitely generated  modules on which the
action of $\mathfrak{h}$ is diagonalizable and the action of $U(\mathfrak{n}_+)$ is locally finite.

For $\lambda\in \mathfrak{h}^*$, we denote by $L(\lambda)$ the {\em simple highest weight} $\mathfrak{g}$-module
with highest weight $\lambda$. These exhaust simple objects in $\mathcal{O}$, up to isomorphism.
The module $L(\lambda)$ is the unique simple quotient of the {\em Verma module} $\Delta(\lambda)$
and also of the {\em indecomposable projective module} $P(\lambda)$.
There is a contravariant simple preserving duality on $\mathcal{O}$ denoted by $M\mapsto M^{\star}$.
We set $\nabla(\lambda):=\Delta(\lambda)^{\star}$ and $I(\lambda):=P(\lambda)^{\star}$. Then 
$I(\lambda)$ is the {\em indecomposable injective envelope} of $L(\lambda)$.

We refer the reader to \cite{Hu} for more details on category $\mathcal{O}$.

\subsection{The principal block $\mathcal{O}_0$}\label{s1.3}

The Weyl group $W$ of $\mathfrak{g}$ acts on $\mathfrak{h}^*$ in the usual way.
We also consider the {\em dot-action} given by $w\cdot \lambda = w(\lambda + \rho) - \rho$ 
where $\rho$ is the half-sum of all positive roots. We denote by $w_0$ the longest element in $W$.

The {\em principal block} $\mathcal{O}_0$ of $\mathcal{O}$ is the Serre subcategory 
of $\mathcal{O}$ generated by all $L(w\cdot 0)$ for $w\in W$. It is a direct summand of 
$\mathcal{O}$ consisting of all modules in $\mathcal{O}$ which have the same generalized 
central character  as the trivial $\mathfrak{g}$-module $L(0)$.
To simplify notation, for $w\in W$, we set $L(w):= L(w\cdot 0)$, $\Delta(w):= \Delta(w\cdot 0)$,
$\nabla(w):= \nabla(w\cdot 0),$ $P(w):= P(w\cdot 0)$ and $I(w):= I(w\cdot 0)$. 

We consider the finite dimensional associative algebra
\begin{displaymath}
\mathtt{A} = \mathrm{End}_{\mathcal{O}}\big(\bigoplus_{w\in W}P(w)\big)^{\mathrm{op}}
\end{displaymath}
and have the usual equivalence between $\mathcal{O}_0$ and the category 
$\mathtt{A}\text{-}\mathrm{mod}$ of finite dimensional left $\mathtt{A}$-modules.
The algebra $\mathtt{A}$ is quasi-hereditary, with respect to the weight poset $W$ equipped
with the usual Bruhat order $\preceq$. The duality $\star$ on $\mathcal{O}$ restricts to  a duality on 
$\mathcal{O}_0$ which, in turn, gives an involution on $\mathtt{A}$ that fixes pointwise  
a complete set of primitive orthogonal idempotents, in particular,
$\mathtt{A}\cong \mathtt{A}^{\mathrm{op}}$.

\subsection{Parabolic subcategories}\label{s1.4}

Let $\mathfrak{p}$ be a parabolic subalgebra of $\mathfrak{g}$ containing $\mathfrak{b}$
and let $W_{\mathfrak{p}}$ be the corresponding {\em parabolic subgroup} of $W$.
The {\em parabolic category} $\mathcal{O}_0^{\mathfrak{p}}$ is the full subcategory of 
$\mathcal{O}_0$ consisting of all modules on which the action of $U(\mathfrak{p})$ is
locally finite. In can be alternatively described as the Serre subcategory of $\mathcal{O}_0$ 
generated by all $L(w)$ where $w\in W^{\mathfrak{p}}$, the set of {\em shortest} representatives of 
the cosets in ${}_{W_{\mathfrak{p}}}\hspace{-1mm}\setminus W$.

The exact inclusion of $\mathcal{O}_0^{\mathfrak{p}}$ into $\mathcal{O}_0$ admits a left adjoint,
denoted $Z^{\mathfrak{p}}$ and called the {\em Zuckerman functor}, defined as the largest quotient 
contained in  $\mathcal{O}_0^{\mathfrak{p}}$. For $w\in W^{\mathfrak{p}}$, 
we set $\Delta^{\mathfrak{p}}(w) := Z^{\mathfrak{p}}\Delta(w)$
and $P^{\mathfrak{p}}(w) := Z^{\mathfrak{p}}P(w)$. The duality $\star$ restricts to 
$\mathcal{O}_0^{\mathfrak{p}}$ and we set $\nabla^{\mathfrak{p}}(w) :=\Delta^{\mathfrak{p}}(w)^{\star}$
and $I^{\mathfrak{p}}(w):=P^{\mathfrak{p}}(w)^{\star}$.

The module $P^{\mathfrak{p}}(w)$ is the indecomposable projective cover of $L(w)$ in $\mathcal{O}_0^{\mathfrak{p}}$.
We set
\begin{displaymath}
\mathtt{A}^{\mathfrak{p}} = \mathrm{End}_{\mathcal{O}}
\big(\bigoplus_{w\in W^{\mathfrak{p}}}P^{\mathfrak{p}}(w)\big)^{\mathrm{op}}, 
\end{displaymath}
so $\mathcal{O}_0^{\mathfrak{p}}$ is equivalent to $\mathtt{A}^{\mathfrak{p}}$-mod. 
The algebra $\mathtt{A}^{\mathfrak{p}}$ is quasi-hereditary with respect to the restriction
of the Bruhat order to $W^{\mathfrak{p}}$ and also
$\mathtt{A}^{\mathfrak{p}}\cong(\mathtt{A}^{\mathfrak{p}})^{\mathrm{op}}$.

\subsection{Hecke algebra and Kazhdan-Lusztig combinatorics}\label{s1.5}

Denote by $S$ the set of simple reflections in $W$ corresponding to our fixed triangular
decomposition of $\mathfrak{g}$. Let $l:W\to \mathbb{Z}$ denote the length function on
$W$ with respect to $S$. Then, associated to the pair $(W,S)$, we have the Hecke 
algebra $\mathcal{H} = \mathcal{H}(W,S)$, which is a free $\mathbb{Z}[v,v^{-1}]$-module 
on generators $H_w$, where $w\in W$, and multiplication is uniquely 
defined using the following formulae: 
\begin{displaymath}
H_xH_y = H_{xy}\quad \text{ whenever }\quad l(xy) = l(x) + l(y)
\end{displaymath}
and
\begin{displaymath}
H_s^2= H_e + (v^{-1} - v)H_s, \quad \text{ for }\quad  s\in S.  
\end{displaymath}
We note that we use the normalization of \cite{So}.

There is a unique involution $\overline{\hspace{1mm}\cdot\hspace{1mm}}$ on $\mathcal{H}$ which maps 
$H_x\mapsto (H_{x^{-1}})^{-1}$  and $v\mapsto v^{-1}$, see \cite{KaLu,So}. We denote by $\underline{H}_w$, 
for $w\in W$,  the corresponding {\em Kazhdan-Lusztig basis} element, see \cite{So}.
Let $\leq_L$, $\leq_R$ and $\leq_J$ denote the {\em Kazhdan-Lusztig left, right} and {\em two-sided}
preorders, respectively. The corresponding equivalence relations will be denoted
$\sim_L$, $\sim_R$ and $\sim_J$, respectively, the are called {\em Kazhdan-Lusztig cells}. For
$w\in W$, we denote by $\mathcal{L}_w$, $\mathcal{R}_w$ and $\mathcal{J}_w$ the left,
right and two-sided Kazhdan-Lusztig cell containing $w$, respectively. In what follows we abbreviate, as usual, 
``Kazhdan-Lusztig'' simply by ``KL''.

In the special case of $\mathfrak{g} = \mathfrak{sl}_n$, we have $W = S_n$ where simple reflections 
are given by elementary transpositions. In this case there is a nice description of KL-cells 
in terms of the Robinson-Schensted correspondence (cf. \cite[Section~3.1]{Sa}), 
see for example \cite{Na} and references therein. 
In particular, this correspondence shows that different left (resp. right) KL-cells inside a 
two-sided KL-cell are not comparable with respect to the left (resp. right) preorders.
The same correspondence also shows that the  intersection of a right and a left KL-cells 
inside the same two-sided KL-cell consists of precisely one element. 
Finally, each left (or right) 
KL-cell contains a unique involution, called the {\em Duflo involution} 
of the KL-cell.

Lusztig's {\em $\mathbf{a}$-function} $\mathbf{a}:W\to\mathbb{Z}_{\geq 0}$, defined in 
\cite{Lu1} and \cite{Lu2}, is associated to the KL-combinatorics. 
In type $A$ it is uniquely determined by the properties that it is constant on two-sided KL-cells of $W$
and that $\mathbf{a}(w)=l(w)$ whenever $w$ is the longest element in $W_{\mathfrak{p}}$ for some
parabolic subalgebra $\mathfrak{p}$ of $\mathfrak{g}$ containing $\mathfrak{b}$.

\subsection{Subcategories associated to right KL-cells}\label{s1.6}

For a right KL-cell $\mathbf{R}$ of $W$, define $\hat{\mathbf{R}} := \{x\in W\mid x\leq_R \mathbf{R}\}$ 
and let $\mathcal{O}_0^{\hat{\mathbf{R}}}$ denote the Serre subcategory of $\mathcal{O}_0$ 
generated by all $L(x)$, where $x\in\hat{\mathbf{R}}$. These categories were introduced in \cite{MS}.
If $\mathfrak{p}$ is a parabolic subalgebra of $\mathfrak{g}$ containing $\mathfrak{b}$, and 
$w_0^{\mathfrak{p}}$ is the longest element in $W_{\mathfrak{p}}$, then $\mathcal{O}_0^{\mathfrak{p}}=\mathcal{O}_0^{\hat{\mathcal{R}}_{w_{\mathfrak{p}}}}$,
where $w_{\mathfrak{p}} = w_0^{\mathfrak{p}}w_0$, see \cite[Remark~14]{MS}.

Similarly to $\mathcal{O}_0^{\mathfrak{p}}$, the exact inclusion of $\mathcal{O}_0^{\hat{\mathbf{R}}}$ 
into $\mathcal{O}_0$ admits a left adjoint, denoted by $Z^{\hat{\mathbf{R}}}$. 
For $x\in \hat{\mathbf{R}}$, the indecomposable projective cover 
$Z^{\hat{\mathbf{R}}}P(x)$ of $L(x)$ in $\mathcal{O}_0^{\hat{\mathbf{R}}}$
is denoted by $P^{\hat{\mathbf{R}}}(x)$. We also define
\begin{displaymath}
\mathtt{A}^{\hat{\mathbf{R}}} := \mathrm{End}_{\mathcal{O}}
\big(\bigoplus_{x\in \hat{\mathbf{R}}}P^{\hat{\mathbf{R}}}(x)\big)^{\mathrm{op}},
\end{displaymath}
so that $\mathcal{O}_0^{\hat{\mathbf{R}}}$ is equivalent to $\mathtt{A}^{\hat{\mathbf{R}}}$-mod.
The duality $\star$ restricts to $\mathcal{O}_0^{\hat{\mathbf{R}}}$ and hence gives 
an involution on $\mathtt{A}^{\hat{\mathbf{R}}}$ which stabilizes a fixed set of primitive orthogonal idempotents.
We note that $\mathtt{A}^{\hat{\mathbf{R}}}$ is not quasi-hereditary in general, see \cite[Section~5.3]{MS}.

For $x\in \hat{\mathbf{R}}$, the indecomposable projective module $P^{\hat{\mathbf{R}}}(x)$
in $\mathcal{O}_0^{\hat{\mathbf{R}}}$ is injective if and only if $x\in \mathbf{R}$,
see \cite[Theorem~6]{Ma}.

\subsection{Graded setup}\label{s1.7}

By graded, we will always mean $\mathbb{Z}$-graded.

A graded finite dimensional associative algebra $\displaystyle A=\bigoplus_{i\in\mathbb{Z}}A_i$
is called {\em positively graded} provided that $A_i=0$ for $i<0$ and $A_0$ is semi-simple.
Consider the category $A$-gmod of finite dimensional graded $A$-modules with the usual
endofunctor $\langle 1\rangle$ which shifts the grading. For a graded $A$-module $M$,
we write
\begin{displaymath}
\max(M) = \max\{i\in \mathbb{Z}\mid M_i\neq 0\}\quad\text{ and }\quad
\min(M) = \min\{i\in\mathbb{Z}\mid M_i\neq 0\}. 
\end{displaymath}
The {\em graded length} of $M$ is then $\mathrm{grl}(M) := \max(M) -\min(M)+1$.

If $A$ is positively graded and $M$ is a graded $A$-module, 
then $M$ has a filtration by $A$-submodules 
$0 = M^{m+1}\subseteq M^{m}\subseteq \cdots \subseteq M^{k+1}\subseteq M^k = M$ 
where we have $k = \min(M)$, $m=\max(M)$ and $\displaystyle M^j = \bigoplus_{i\geq j}M_i$. 
This filtration is called the {\em grading filtration}. All subquotients of this filtration are
semi-simple.

\subsection{Graded category $\mathcal{O}$}\label{s1.8}

The algebra $\mathtt{A}$ can be positively graded. In fact, it is a Koszul algebra,
see \cite{So1,BGS,Ma0}. The algebras $\mathtt{A}^{\mathfrak{p}}$ 
and $\mathtt{A}^{\hat{\mathbf{R}}}$ are quotients of $\mathtt{A}$ and in this way they inherit
from $\mathtt{A}$ a positive grading, see for example \cite[Section~2.3]{Ma1}. We denote by 
$\mathtt{A}_{\mathbb{Z}}$, $\mathtt{A}^{\mathfrak{p}}_{\mathbb{Z}}$ and 
$\mathtt{A}^{\hat{\mathbf{R}}}_{\mathbb{Z}}$ the graded versions of these algebras, 
and by ${}^{\mathbb{Z}}\mathcal{O}_0$, ${}^{\mathbb{Z}}\mathcal{O}_0^{\mathfrak{p}}$ 
and ${}^{\mathbb{Z}}\mathcal{O}_0^{\hat{\mathbf{R}}}$ the corresponding categories of finite dimensional
graded modules.

For each of the above categories, we then have a forgetful functor to the corresponding 
non-graded categories $\mathcal{O}_0$, $\mathcal{O}_0^{\mathfrak{p}}$ and $\mathcal{O}_0^{\hat{\mathbf{R}}}$. 
We call a module $M$ in any of the non-graded categories {\em gradable} if it is isomorphic to
an image of some graded module $\widetilde{M}$ under the corresponding forgetful functor.
The module  $\widetilde{M}$ is then called a {\em graded lift} of $M$. 
All the modules $L(x)$, $P(x)$, $P^{\mathfrak{p}}(x)$ and $P^{\hat{\mathbf{R}}}(x)$, for $x\in W$, are 
gradable, see \cite[Theorem~2.1]{St1} and \cite[Theorem~2.1]{St}. By \cite[Lemma 1.5]{St1}, 
a graded lift of any of the aforementioned modules will be unique up to isomorphism and 
shift of grading. If $M$ is one of those modules, we will denote by $\widetilde{M}$ the graded 
lift whose head is concentrated in degree zero. Similarly, we define graded lifts 
for Verma modules and their quotients. For dual Verma modules and injective modules, 
the standard graded lift is the one in which 
the socle is concentrated in degree zero.

\subsection{Projective functors}\label{s1.9}

Following \cite{BG}, a {\em projective functor} on $\mathcal{O}$  is a functor isomorphic to a direct 
summand of the functor of tensoring with a finite dimensional $\mathfrak{g}$-module. Projective functors 
on $\mathcal{O}_0$ were classified in \cite{BG}. It turns out that indecomposable projective functors
on  $\mathcal{O}_0$ are in bijection with elements in $W$. For $w\in W$, we denote the corresponding
projective endofunctor of $\mathcal{O}_0$ by $\theta_w$. The functor $\theta_w$ is the unique, up
to isomorphism, projective functor with the property $\theta_w P(e) = P(w)$.

From \cite[Theorem~8.2]{St1} (see also \cite[Corollary~3.2]{St}) it follows that each 
$\theta_w$ admits a graded lift to an endofunctor of ${}^{\mathbb{Z}}\mathcal{O}_0$ and 
this lift is unique up to isomorphism and shift of grading.  We denote the corresponding functor by
$\widetilde{\theta}_w$ which we normalize by the condition
$\widetilde{\theta}_w\widetilde{P}(e) = \widetilde{P}(w)$.

\subsection{Decategorification of the action of projective functors}\label{s1.10}

Let $\tilde{\mathcal{P}}$ denote the additive tensor category of graded projective functors. 
It acts on ${}^{\mathbb{Z}}\mathcal{O}_0$ in the natural way. The Grothendieck group 
$[{}^{\mathbb{Z}}\mathcal{O}_0]$
of ${}^{\mathbb{Z}}\mathcal{O}_0$
is identified with $\mathcal{H}$ by sending $[\widetilde{\Delta}(w)]$ to $H_w$
under the convention that multiplication by $v$ corresponds to the shift $\langle 1\rangle$ of grading.
The split Grothendieck group $[\tilde{\mathcal{P}}]_{\oplus}$
of $\tilde{\mathcal{P}}$ is similarly identified with 
$\mathcal{H}$ by sending $[\widetilde{\theta}_w]$ to $\underline{H}_w$.
In this way the action of $\tilde{\mathcal{P}}$ on ${}^{\mathbb{Z}}\mathcal{O}_0$ gives the
right regular representation of $\mathcal{H}$, see \cite[Theorem~7.11]{Ma} for more details.
The ungraded version ${\mathcal{P}}$ of $\tilde{\mathcal{P}}$ is defined similarly and categorifies the
right regular $\mathbb{Z}[W]$-module, see also Subsection~\ref{s2.5}.

\section{Preliminaries from $2$-representation theory}\label{s2}

\subsection{Finitary and fiat $2$-categories}\label{s2.1}

We denote by $\mathbf{Cat}$ the category of small categories. A $2$-category is a category 
{\em enriched over} $\mathbf{Cat}$. Thus, a $2$-category $\cC$ consists of objects and 
morphism categories $\cC(\mathtt{i},\mathtt{j})$, objects of which, in turn, are $1$-morphisms of
$\cC$ and morphisms of which are $2$-morphisms of $\cC$. As usual, we denote by $\circ_0$ 
and $\circ_1$ the horizontal and vertical composition of $2$-morphisms, respectively. We refer
to \cite{Le} for more details.

Following \cite{MM1}, we call a $2$-category $\cC$ {\em finitary} provided that
\begin{itemize}
\item $\cC$ has finitely many objects;
\item each morphism category $\cC(\mathtt{i},\mathtt{j})$ is $\mathbb{C}$-linear, additive and
idempotent split with finitely many isomorphism classes of indecomposable objects and 
finite dimensional spaces of $2$-morphisms;
\item all compositions are biadditive and $\mathbb{C}$-bilinear when applicable;
\item all identity $1$-morphisms are indecomposable.
\end{itemize}
Furthermore, we call a finitary $2$-category $\cC$ {\em weakly fiat} provided that
\begin{itemize}
\item $\cC$ has a weak anti-automorphism $(-)^*$ which reverses direction of both $1$-morphisms and
$2$-morphisms;
\item $\cC$ has {\em adjunction $2$-morphisms} $\alpha: F\circ F^*\rightarrow \mathbbm{1}_\mathtt{j}$ and 
$\beta: \mathbbm{1}_\mathtt{i}\rightarrow F^*\circ F$ such that 
$\alpha_F\circ_1 F(\beta) = \mathrm{id}_F$ and $F^*(\alpha)\circ_1\beta_{F^*} = \mathrm{id}_{F^*}$. 
\end{itemize}
Here $\mathbbm{1}_\mathtt{j}$ is the identity $1$-morphism for the object  $\mathtt{j}$, further, 
$\mathrm{id}_F$ is the identity $2$-morphism for the $1$-morphism $F$ and, finally,
$F(\beta)$ stands for $\mathrm{id}_F\circ_0 \beta$ and $\alpha_F$ stands for $\alpha\circ_0\mathrm{id}_F$.
If $(-)^*$ is involutive, then $\cC$ is called {\em fiat}. For example, $\mathcal{P}$ is biequivalent to
a fiat $2$-category, see Subsection~\ref{s2.5} for details.

A {\em $2$-representation} of a $2$-category $\cC$ is a strict $2$-functor from $\cC$ to
$\mathbf{Cat}$. All $2$-representations of $\cC$ form a $2$-category, denoted $\cC$-mod,
in which $1$-morphisms are non-strict $2$-natural transformations and $2$-morphisms are modifications,
see e.g. \cite{MM3} for details.

A {\em $2$-representation} of $\cC$ is called {\em additive} if it is given by an additive 
$\mathbb{C}$-linear action of $\cC$ on additive, idempotent split, $\mathbb{C}$-linear categories
with finitely many isomorphism classes of indecomposable objects. The $2$-category of 
additive $2$-representations of $\cC$ is denoted by $\cC$-amod.

\subsection{Combinatorics of finitary $2$-categories}\label{s2.2}

For a finitary $2$-category $\cC$ consider the set $\mathcal{S}_{\ccC}$ of isomorphism classes of 
indecomposable $1$-morphisms in $\cC$. The set $\mathcal{S}_{\ccC}$ has the natural structure of a
{\em multisemigroup} given by
\begin{displaymath}
[F]\circ [G]=\{[H]\,\vert\, H\text{ is isomorphic to a direct summand of } F\circ G\},
\end{displaymath}
see \cite[Section~3]{MM2} for details. The left preorder $\leq_L$ on $\mathcal{S}_{\ccC}$ is given by 
$F\leq_L G$ if and only if $[G]\in \mathcal{S}_{\ccC}\circ [F]$. An equivalence class of 
$\leq_L$ is called a {\em left cell}. The right preorder $\leq_R$ and the corresponding right
cells are defined similarly using right multiplication. The two-sided preorder $\leq_J$ 
and the corresponding two-sided cells are defined similarly using two-sided multiplication.

\subsection{Principal and cell $2$-representations}\label{s2.3}

For a finitary $2$-category 
$\cC$ and an object $\mathtt{i}\in\cC$, we denote by $\mathbf{P}_{\mathtt{i}}$
the corresponding {\em principal} $2$-representation $\cC(\mathtt{i},{}_-)$. 

Let $\mathcal{L}$ be a left cell in $\mathcal{S}_{\ccC}$ and $\mathtt{i}$ be the object in $\cC$
which is the origin of all $1$-morphisms in $\mathcal{L}$. Denote by $\mathbf{N}_{\mathcal{L}}$
the additive closure inside $\mathbf{P}_{\mathtt{i}}$ of all $1$-morphisms $F$ such that $F\geq_L\mathcal{L}$.
Then $\mathbf{N}_{\mathcal{L}}$ is a $2$-representation of $\mathcal{S}_{\ccC}$ by restriction.
This $2$-representation has a unique maximal $\cC$-invariant ideal $\mathbf{I}_{\mathcal{L}}$
and the quotient $\mathbf{N}_{\mathcal{L}}/\mathbf{I}_{\mathcal{L}}$ is called the 
{\em cell $2$-representation} of $\cC$ corresponding to $\mathcal{L}$ and denoted by $\mathbf{C}_{\mathcal{L}}$,
see \cite[Section~6.5]{MM2} for details.

A two-sided cell $\mathcal{J}$ is called {\em regular} provided that different left (resp. right) 
cells inside $\mathcal{J}$ are not comparable with respect to the left (resp. right) order.
A regular two-sided cell $\mathcal{J}$ is called {\em strongly regular} provided that 
the intersection of any left and any right cell inside $\mathcal{J}$ consists of exactly one element.
For example, all two-sided cells of the tensor category $\mathcal{P}$
for $\mathfrak{g}$ of type $A$ are strongly regular, see \cite[Subsection~7.1]{MM1} for details.
We note that, due to the fact that the action of $\mathcal{P}$ on $\mathcal{O}_0$ is a {\em right}
action, the Kazhdan-Lusztig left (resp. right) order as defined in Subsection~\ref{s1.5} corresponds to the
right (resp. left) order as defined in this subsection.

\subsection{Transitive and simple transitive $2$-representations}\label{s2.4}

Let $\cC$ be a finitary $2$-category and $\mathbf{M}\in\cC$-amod. The $2$-representation $\mathbf{M}$
is called {\em transitive} provided that for any $\mathtt{i}\in\cC$ and
any indecomposable object $X\in \mathbf{M}(\mathtt{i})$, the additive closure of all objects of
the form $\mathbf{M}(F)\, X$, where $F$ is a $1$-morphism in $\cC$, equals $\mathbf{M}$.
A transitive $2$-representation $\mathbf{M}$ of $\cC$ is called {\em simple transitive} if it
does not have any proper $\cC$-invariant ideals, see \cite{MM5} for details. The arguments in this
paper use crucially the following statement proved in \cite[Theorem~43]{MM1},
\cite[Theorem~18]{MM5} and \cite[Theorem~31]{MM6}.

\begin{theorem}\label{thm3}
Let $\cC$ be a weakly fiat $2$-category in which all two-sided cells are strongly regular.
\begin{enumerate}[$($i$)$]
\item\label{thm3.1} Each simple transitive $2$-representation of $\cC$ is equivalent to a cell $2$-representation.
\item\label{thm3.2} For any two left cells inside the same two-sided cell, the corresponding 
cell $2$-representation of $\cC$ are equivalent.
\end{enumerate}
\end{theorem}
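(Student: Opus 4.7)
The plan is to first reduce, for part \eqref{thm3.1}, to the situation where the simple transitive $2$-representation $\mathbf{M}$ has a well-defined \emph{apex}: a unique two-sided cell $\mathcal{J}$ which is maximal with respect to $\leq_J$ among those two-sided cells acting nontrivially on $\mathbf{M}$. Existence and uniqueness of such a $\mathcal{J}$ is a standard consequence of simple transitivity combined with the adjunctions supplied by weak fiatness (the adjoints force the set of $1$-morphisms annihilating a given indecomposable object to be a $\cC$-invariant ideal, which by simple transitivity must be either zero or everything). I would then fix an object $\mathtt{i}\in \cC$ arising as the source of some $1$-morphism of the apex, fix an indecomposable $X\in \mathbf{M}(\mathtt{i})$, and let $\mathcal{L}$ be the left cell of $\mathcal{J}$ consisting of $1$-morphisms with source $\mathtt{i}$.

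The next step is to construct a comparison morphism $\Phi\colon \mathbf{C}_{\mathcal{L}}\to \mathbf{M}$. By the Yoneda-type description, a $2$-natural transformation out of the principal $2$-representation $\mathbf{P}_{\mathtt{i}}$ is determined by the image of $\mathbbm{1}_{\mathtt{i}}$, so sending $\mathbbm{1}_{\mathtt{i}}\mapsto X$ gives a $2$-natural transformation $\mathbf{P}_{\mathtt{i}}\to \mathbf{M}$. I would then verify that this descends to $\mathbf{C}_{\mathcal{L}}=\mathbf{N}_{\mathcal{L}}/\mathbf{I}_{\mathcal{L}}$: the subcategory $\mathbf{N}_{\mathcal{L}}$ is preserved because everything strictly below the apex acts as zero on $X$, and the maximal ideal $\mathbf{I}_{\mathcal{L}}$ is killed because $\mathbf{M}$ is simple transitive. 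Here the strong regularity hypothesis does the essential combinatorial work: for each pair of a left cell $\mathcal{L}'\subseteq\mathcal{J}$ and a right cell $\mathcal{R}\subseteq\mathcal{J}$, the intersection $\mathcal{L}'\cap\mathcal{R}$ is a singleton, so the Cartan-type matrices encoding the multiplicities of indecomposable $1$-morphisms acting on indecomposables match in $\mathbf{C}_{\mathcal{L}}$ and in $\mathbf{M}$.

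The main obstacle is upgrading $\Phi$ to an equivalence. Essential surjectivity follows from simple transitivity of $\mathbf{M}$ applied to the nonzero object $X$ in the image. The kernel of $\Phi$ is a $\cC$-invariant ideal of $\mathbf{C}_{\mathcal{L}}$, and cell $2$-representations are themselves simple transitive, so it is either zero or all of $\mathbf{C}_{\mathcal{L}}$; the latter is excluded because $\Phi(\mathbbm{1}_{\mathtt{i}})=X\neq 0$. The nontrivial remaining content is matching hom-spaces dimension-wise; the adjunctions from weak fiatness allow one to reduce computation of $\mathrm{Hom}(\mathbf{M}(F)X,\mathbf{M}(G)X)$ to composition multiplicities of $G\circ F^*$, which by strong regularity are identical on both sides of $\Phi$. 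This is the delicate step and is exactly where the hypotheses on $\cC$ are used in full.

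For part \eqref{thm3.2}, given two left cells $\mathcal{L}_1,\mathcal{L}_2\subseteq\mathcal{J}$, both cell $2$-representations $\mathbf{C}_{\mathcal{L}_1}$ and $\mathbf{C}_{\mathcal{L}_2}$ are simple transitive with apex $\mathcal{J}$, so part \eqref{thm3.1} immediately yields that each is equivalent to some cell $2$-representation, and a short direct computation using the unique intersection of $\mathcal{L}_1$ with the right cell containing the Duflo involution of $\mathcal{L}_2$ produces an explicit intertwining $1$-morphism. Strong regularity again guarantees that the composition of this intertwiner with its adjoint produces no spurious summands, so one obtains an honest equivalence $\mathbf{C}_{\mathcal{L}_1}\simeq \mathbf{C}_{\mathcal{L}_2}$ rather than a mere comparison at the level of decategorifications.
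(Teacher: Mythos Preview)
The paper does not give a proof of this theorem at all: it is stated as a quotation of results established elsewhere, namely \cite[Theorem~43]{MM1}, \cite[Theorem~18]{MM5} and \cite[Theorem~31]{MM6}. So there is no in-paper argument to compare your proposal against; what you have written is a sketch of the proofs in those references rather than an alternative to anything in the present paper.

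That said, as a sketch of the cited arguments your outline is in the right spirit but contains a couple of imprecisions worth flagging. In part~\eqref{thm3.1}, the sentence ``the maximal ideal $\mathbf{I}_{\mathcal{L}}$ is killed because $\mathbf{M}$ is simple transitive'' has the logic inverted. What one actually shows is that the kernel $\mathbf{K}$ of the map $\mathbf{N}_{\mathcal{L}}\to\mathbf{M}$ is a proper $\cC$-invariant ideal, hence $\mathbf{K}\subseteq\mathbf{I}_{\mathcal{L}}$ by maximality; this gives a quotient map $\mathbf{M}\cong\mathbf{N}_{\mathcal{L}}/\mathbf{K}\twoheadrightarrow\mathbf{C}_{\mathcal{L}}$, and then simple transitivity of $\mathbf{M}$ forces $\mathbf{I}_{\mathcal{L}}/\mathbf{K}=0$. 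Your phrasing suggests the opposite containment comes for free, which it does not. In part~\eqref{thm3.2}, the opening remark that part~\eqref{thm3.1} ``yields that each is equivalent to some cell $2$-representation'' is vacuous, since $\mathbf{C}_{\mathcal{L}_1}$ and $\mathbf{C}_{\mathcal{L}_2}$ already \emph{are} cell $2$-representations; the content is entirely in the explicit intertwiner you describe afterwards, which is indeed how \cite[Theorem~43]{MM1} proceeds.
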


\subsection{The $2$-category of projective functors}\label{s2.5}

We fix some small category $\mathcal{A}$ equivalent to $\mathcal{O}_0$ and consider the
$2$-category $\cP$ of {\em projective endofunctors} of $\mathcal{A}$, see \cite[Subsection~7.1]{MM1}.
This is a fiat $2$-category with one object $\mathtt{i}$ (which should be thought of as $\mathcal{A}$)
and a weak involution given by $\theta_w^*=\theta_{w^{-1}}$, for $w\in W$.
Indecomposable $1$-morphisms in this category are exactly $\theta_w$, for $w\in W$, up to isomorphism.
Because of the right action, cell $2$-representations of $\cP$ are indexed by {\em right} 
KL-cells. For a right KL-cell $\mathbf{R}$, the corresponding cell $2$-representation of
$\cP$ is equivalent to the action of $\cP$ on the additive category of projective-injective modules
in $\mathcal{O}_0^{\hat{\mathbf{R}}}$, see \cite{MS} and \cite{MM1} for details.

If $\mathfrak{g}$ is of type $A$, then, as already mentioned in Subsection~\ref{s1.5}, all
two-sided cells  in $\cP$ are strongly regular. In particular, 
Theorem~\ref{thm3} gives a complete description of all simple transitive $2$-representations of 
$\cP$ in this case, up to equivalence.

Recall (see for example \cite[Lemma~12]{MM1} or \cite[Proposition~1]{Ma1}
or \cite[(1.4)]{mathas}) that, for $x,y\in W$, we have
\begin{equation}\label{eq1}
\theta_x L(y)\neq 0 \quad\text{ if and only if }\quad x^{-1}\leq_L y
\quad\text{ if and only if }\quad x\leq_R y^{-1}.
\end{equation}
For any KL-right cell $\mathbf{R}$, we have  
$L(y)\in \mathcal{O}_0^{\hat{\mathbf{R}}}$ if and only if $y\leq_R \mathbf{R}$.
This, combined with Formula~\eqref{eq1}, completely determines which $\theta_x$ survive 
restriction to $\mathcal{O}_0^{\hat{\mathbf{R}}}$.

Now, for a fixed parabolic $\mathfrak{p}$ in $\mathfrak{g}$ containing $\mathfrak{b}$,
we can consider the Serre subcategory $\mathcal{A}^{\mathfrak{p}}$ of $\mathcal{A}$
which corresponds to $\mathcal{O}_0^{\mathfrak{p}}$. The action of $\cP$ preserves
$\mathcal{O}_0^{\mathfrak{p}}$ and hence we can define the $2$-category $\cP^{\mathfrak{p}}$
as the $2$-category given by the additive closure of the $2$-action of $\cP$ on 
$\mathcal{O}_0^{\mathfrak{p}}$, the so-called {\em image completion} of the $2$-action in the
sense of \cite[Subsection~7.3]{MM2}. In more detail:
\begin{itemize}
\item the $2$-category $\cP^{\mathfrak{p}}$ has the same objects as $\cP$;
\item $1$-morphisms in $\cP^{\mathfrak{p}}$ are all 
endofunctors of $\mathcal{O}_0^{\mathfrak{p}}$ which belong to the additive closure of 
endofunctors given by the action of $1$-morphisms in $\cP$ on $\mathcal{O}_0^{\mathfrak{p}}$;
\item $2$-morphisms in $\cP^{\mathfrak{p}}$ are all natural transformations of 
endofunctors of $\mathcal{O}_0^{\mathfrak{p}}$.
\end{itemize}
We note that, while $\cP$ is fiat, the
$2$-category $\cP^{\mathfrak{p}}$ is, at the present stage, only weakly fiat.
In fact, it is Theorem~\ref{thm1}\eqref{thm1.1} which implies that $\cP^{\mathfrak{p}}$ is also fiat.

We would like to mention once more that, due to the fact that the action of $\mathcal{P}$ 
on $\mathcal{O}_0$ is a {\em right} action, the Kazhdan-Lusztig left (resp. right) order 
as defined in Subsection~\ref{s1.5} corresponds to the right (resp. left) order as defined in 
this subsection. In particular, for any simple reflection $s$ and any $w\in W$ such that $l(sw)>l(w)$,
we have $\theta_s\theta_w= \theta_{ws}\oplus \text{other terms}$, see \cite{BG,St,MS} for details.

\section{Proof of Theorem~\ref{thm1}}\label{s3}

\subsection{Proof of Theorem~\ref{thm1}\eqref{thm1.1}}\label{s3.1}

For the rest of this section we set $\mathfrak{g} = \mathfrak{sl}_n$ and let $\mathfrak{p}$ be a 
parabolic subalgebra of $\mathfrak{g}$ containing $\mathfrak{b}$. 
Recall that $w_0^{\mathfrak{p}}$ denotes the longest element of the 
parabolic Weyl group $W_{\mathfrak{p}}$ corresponding to $\mathfrak{p}$. 
Set $w_{\mathfrak{p}} = w_0^{\mathfrak{p}}w_0$ and let 
$\mathbf{R}:=\mathcal{R}_{w_{\mathfrak{p}}}$. Then we have 
$\mathcal{O}_0^{\mathfrak{p}} = \mathcal{O}_0^{\hat{\mathbf{R}}}$.

For a module $M$, denote by $\ell\ell(M)$ the Loewy length of $M$, i.e. 
the shortest length of a filtration of $M$ with semi-simple quotients.

\begin{lemma}\label{glll}
Let $M\in {}^{\mathbb{Z}}\mathcal{O}_0^{\mathfrak{p}}$. Then $\ell\ell(M)\leq \mathrm{grl}(M)$.
\end{lemma}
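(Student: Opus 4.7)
The plan is to observe that this is an essentially formal consequence of the fact, recorded in Subsection~\ref{s1.7}, that $\mathtt{A}^{\mathfrak{p}}$ is positively graded with semisimple degree zero part. Since $\mathcal{O}_0^{\mathfrak{p}}\simeq \mathtt{A}^{\mathfrak{p}}\text{-}\mathrm{mod}$ and, via Subsection~\ref{s1.8}, ${}^{\mathbb{Z}}\mathcal{O}_0^{\mathfrak{p}}\simeq \mathtt{A}^{\mathfrak{p}}_{\mathbb{Z}}\text{-}\mathrm{gmod}$, I can reduce to a purely algebraic statement about finite dimensional graded modules over a positively graded algebra.

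The key input is the standard fact that for a positively graded finite dimensional algebra $A$ with $A_0$ semisimple, the radical is exactly $J:=\bigoplus_{i>0}A_i$. Indeed, $A/J\cong A_0$ is semisimple, so $\mathrm{rad}(A)\subseteq J$, and $J$ is a nilpotent two-sided ideal (as $A$ is finite dimensional and $J^n\subseteq \bigoplus_{i\geq n}A_i$), so $J\subseteq \mathrm{rad}(A)$. Consequently, for any graded $A$-module $M$ and any integer $j$, multiplication by $J$ sends $M_j$ into $\bigoplus_{i>j}M_i$.

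Now I apply this to the grading filtration $0=M^{m+1}\subseteq M^m\subseteq\cdots\subseteq M^{k+1}\subseteq M^k=M$ recalled in Subsection~\ref{s1.7}, with $k=\min(M)$ and $m=\max(M)$. By the previous paragraph, each subquotient $M^j/M^{j+1}\cong M_j$ is annihilated by $\mathrm{rad}(\mathtt{A}^{\mathfrak{p}})$, hence is semisimple. This is a filtration with semisimple subquotients and length at most $m-k+1=\mathrm{grl}(M)$. Since $\ell\ell(M)$ is by definition the shortest length of such a filtration, the inequality $\ell\ell(M)\leq \mathrm{grl}(M)$ follows immediately.

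There is no real obstacle here: everything needed is either spelled out in Subsection~\ref{s1.7} or is the elementary identification of the radical of a positively graded algebra with semisimple degree zero part. The only thing to double-check is that the positive grading on $\mathtt{A}^{\mathfrak{p}}$ inherited from the Koszul grading on $\mathtt{A}$ (see Subsection~\ref{s1.8}) genuinely has $(\mathtt{A}^{\mathfrak{p}})_0$ semisimple, but this is immediate because $(\mathtt{A}^{\mathfrak{p}})_0$ is a quotient of the semisimple algebra $\mathtt{A}_0$.
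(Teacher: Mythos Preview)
Your proof is correct and follows essentially the same approach as the paper: both use that $\mathtt{A}^{\mathfrak{p}}_{\mathbb{Z}}$ inherits a positive grading from the Koszul grading on $\mathtt{A}_{\mathbb{Z}}$, so that the grading filtration of $M$ has semi-simple subquotients and length $\mathrm{grl}(M)$, immediately giving $\ell\ell(M)\leq \mathrm{grl}(M)$. Your version simply spells out in more detail why positivity of the grading forces the subquotients $M^j/M^{j+1}$ to be semi-simple.
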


\begin{proof}
Since $\mathtt{A}_{\mathbb{Z}}$ is Koszul, it is positively graded. 
This property is inherited by the quotient $\mathtt{A}^{\mathfrak{p}}_{\mathbb{Z}}$.
Thus, the grading filtration of $M$ is a filtration of length $\mathrm{grl}(M)$ with semi-simple 
subquotients, and thus $\ell\ell(M)\leq \mathrm{grl}(M)$.
\end{proof}

Let, from now on, $x\in W$ be such that $\theta_x$ is non-zero when restricted to $\mathcal{O}_0^{\mathfrak{p}}$.

\begin{lemma} \label{intersectstwosided}
There is some $y\in \hat{\mathbf{R}}$ such that $x\sim_{J} y$.
\end{lemma}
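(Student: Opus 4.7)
The plan is to combine Formula~\eqref{eq1} with the Robinson--Schensted combinatorics available in type~$A$. By hypothesis, $\theta_x L(z)\neq 0$ for some simple $L(z)\in\mathcal{O}_0^{\mathfrak{p}}$, which forces $z\in\hat{\mathbf{R}}$ (i.e.\ $z\leq_R w_{\mathfrak{p}}$); by~\eqref{eq1} this gives $x^{-1}\leq_L z$. Both $\leq_L$ and $\leq_R$ refine $\leq_J$, and in type~$A$ the Robinson--Schensted correspondence assigns $w$ and $w^{-1}$ the same shape, hence $w\sim_J w^{-1}$ for every $w\in W$. Chaining these facts yields
\[
\mathcal{J}_x \;=\; \mathcal{J}_{x^{-1}} \;\leq_J\; \mathcal{J}_z \;\leq_J\; \mathcal{J}_{w_{\mathfrak{p}}}.
\]

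It remains to produce $y\in\mathcal{J}_x\cap\hat{\mathbf{R}}$. Under RSK, two-sided cells correspond to partitions of $n$; write $\lambda=\mathrm{shape}(x)$, and let $\mu=(n_1,\dots,n_k)$ be the composition of $n$ describing $\mathfrak{p}$. A direct RSK computation (using that right-multiplication by $w_0$ reverses one-line notation and transposes the RSK shape) gives $\mathrm{shape}(w_{\mathfrak{p}})=\mu^{\downarrow}$, the decreasing rearrangement of $\mu$. With the paper's normalization ($\mathbf{a}(e)=0$, so $e$ is minimal in $\leq_J$), the order $\leq_J$ on two-sided cells is opposite to the dominance order on shapes, and the inequality above becomes $\lambda\trianglerighteq\mu^{\downarrow}$.

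On the other hand, $\hat{\mathbf{R}}=W^{\mathfrak{p}}$ is characterized by the descent condition $D_L(w)\subseteq S\setminus S_{\mathfrak{p}}$, which via RSK reads $\mathrm{Des}(P(w))\subseteq S\setminus S_{\mathfrak{p}}$ (the ``break-point'' simple reflections determined by $\mu$). The standard bijection between standard Young tableaux of shape $\lambda$ whose descent set is contained in $S\setminus S_{\mathfrak{p}}$ and semi-standard Young tableaux of shape $\lambda$ and content $\mu$ identifies the number of admissible $P$-tableaux with the Kostka number $K_{\lambda,\mu}$. Since $K_{\lambda,\mu}>0$ precisely when $\lambda\trianglerighteq\mu^{\downarrow}$, the inequality we have established supplies the required $y$.

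The main obstacle is the second half: the computation $\mathrm{shape}(w_{\mathfrak{p}})=\mu^{\downarrow}$ and the translation of the characterization of $W^{\mathfrak{p}}$ into a Kostka positivity statement via RSK. Each ingredient is standard in RSK and symmetric-function theory, but one must keep the conventions straight --- in particular, that in this paper $\leq_J$ is opposite to the dominance order on shapes, and that Kostka-positivity is governed by dominance of $\lambda$ over the decreasing rearrangement of $\mu$.
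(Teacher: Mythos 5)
Your proof is correct, and for the crucial step it takes a genuinely different route from the paper. The first reduction is identical: Formula~\eqref{eq1} together with $z\sim_J z^{-1}$ gives $x\leq_J w_{\mathfrak{p}}$, so everything hinges on showing that every two-sided cell below $\mathcal{J}_{w_{\mathfrak{p}}}$ meets $\hat{\mathbf{R}}$. The paper does this representation-theoretically: the action of projective functors on $\mathcal{O}_0^{\mathfrak{p}}$ categorifies the induced sign module \cite[Proposition~30]{MS}, whose Specht constituents are governed by dominance, and the weak Jordan--H{\"o}lder theory of \cite{MM5} matches these constituents with the right KL-cells contained in $\hat{\mathbf{R}}$; the comparison of $\leq_J$ with dominance from \cite{Ge} then forces the intersection to be non-empty. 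You instead construct $y$ directly by combinatorics: $\mathrm{shape}(w_{\mathfrak{p}})=\mu^{\downarrow}$, Geck's theorem turns $x\leq_J w_{\mathfrak{p}}$ into $\lambda\trianglerighteq\mu^{\downarrow}$, the identification $\hat{\mathbf{R}}=W^{\mathfrak{p}}$ (already in Subsections~\ref{s1.4} and \ref{s1.6}) translates membership in $\hat{\mathbf{R}}$ into $\mathrm{Des}(P(y))\subseteq S\setminus S_{\mathfrak{p}}$, and the standardization bijection plus $K_{\lambda\mu}>0\Leftrightarrow\lambda\trianglerighteq\mu^{\downarrow}$ produces an admissible insertion tableau, hence (via inverse RSK with any recording tableau of the same shape) an element $y\in\mathcal{J}_x\cap\hat{\mathbf{R}}$. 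Your route is more elementary and self-contained, avoiding \cite[Proposition~30]{MS} and the $2$-representation machinery entirely, at the cost of being tied to symmetric-group combinatorics (which the lemma is anyway); note also that it yields an explicit $y$ rather than a pure existence statement. Two points to tighten: the assertion that $\leq_J$ is the reverse of dominance on RSK shapes is not a formal consequence of the normalization but is precisely the content of \cite[Theorem~5.1]{Ge} (the paper's conventions make $e$ minimal for $\leq_J$ while its shape $(n)$ is dominance-maximal, so cite Geck and fix the direction as you did); and spell out the final inverse-RSK step, i.e.\ that an admissible $P$-tableau of shape $\lambda$ is realized as $P(y)$ for some $y$, which then lies in the two-sided cell of $x$ and in $\hat{\mathbf{R}}$.
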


\begin{proof} 
Since $\theta_x$ is non-zero when restricted to $\mathcal{O}_0^{\hat{\mathbf{R}}}$, 
there is some $z\leq_R w_{\mathfrak{p}}$ such that $x\leq_R z^{-1}$, see Formula~\eqref{eq1}. 
But then $x\leq_R z^{-1}\sim_{J} z\leq_R w_{\mathfrak{p}}$ and thus $x\leq_{J} w_{\mathfrak{p}}$.
Therefore we have to show that for any two-sided KL-cell $\mathbf{J}$ such that 
$\mathbf{J}\leq_{J} \mathbf{R}$ we have  $\mathbf{J}\cap \hat{\mathbf{R}}\neq\emptyset$.

To prove this we recall that the action of projective functors on $\mathcal{O}_0^{\mathfrak{p}}$ 
categorifies, after extending scalars to $\mathbb{C}$, the induced sign 
$\mathbb{C}[W]$-module by \cite[Proposition~30]{MS}.
This module is a direct sum of Specht modules, where the Specht module for a partition 
$\lambda$ occurs at least once whenever $\lambda\unlhd\mu$, where $\mu$ is the partition 
corresponding to $\mathfrak{p}$ and $\unlhd$ denotes the dominance ordering, see \cite[Corollary~2.4.7]{Sa}. 
On the other hand, the Kazhdan-Lusztig cell $\mathbb{C}[W]$-module  
associated to a right KL-cell inside a two-sided KL-cell is exactly
the Specht module for the partition corresponding to the two-sided KL-cell via the 
Robinson-Schensted correspondence, see \cite[Theorem~1.4]{KaLu} and \cite[Theorem~4.1]{Na}. 
The $2$-representation corresponding to the action of projective functors on 
$\mathcal{O}_0^{\mathfrak{p}}$ has a weak Jordan-H{\"o}lder series in the sense of  
\cite[Section~4.3]{MM5} corresponding to  the right KL-cells in $\hat{\mathbf{R}}$
(the corresponding subquotients are unique in the sense of \cite[Theorem 8]{MM5}).
In the Grothendieck group, this gives a Jordan-H{\"o}lder series for the induced sign $\mathbb{C}[W]$-module
by KL-cell modules corresponding to the right KL-cells which appear in $\hat{\mathbf{R}}$.
Hence, whenever a Specht module corresponding to a partition $\lambda$ occurs 
in the induced sign $\mathbb{C}[W]$-module, there must be a two-sided KL-cell corresponding to 
$\lambda$ which intersects $\hat{\mathbf{R}}$ non-trivially. Since the dominance order coincides 
with the two-sided order by \cite[Theorem~5.1]{Ge}, the claim of the lemma follows.
\end{proof}

Let $\mathbf{J}$ be the two-sided cell containing $x$ and 
$\mathbf{R}'\subseteq \hat{\mathbf{R}}$ be a right cell such that 
$\mathbf{R}'\cap \mathbf{J}\neq\emptyset$, which exists by Lemma \ref{intersectstwosided}
(note that $\mathbf{R}'$ is not uniquely determined by these properties).

\begin{lemma}\label{uniquesummand}
There is a unique $y\in \mathbf{R}'$ such that $\theta_x L(y)\neq 0$ 
and with this choice of $y$ the module $\theta_x L(y)$ is indecomposable.
\end{lemma}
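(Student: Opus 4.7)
The plan has two stages: uniqueness of $y$, and indecomposability of $\theta_x L(y)$.

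For uniqueness, I would apply Formula~\eqref{eq1}, which states that $\theta_x L(y)\neq 0$ iff $x^{-1}\leq_L y$. Since $\mathbf{R}'\cap\mathbf{J}\neq\emptyset$ and any right KL-cell is contained in a single two-sided cell, $\mathbf{R}'\subseteq\mathbf{J}$; moreover $x^{-1}\in\mathbf{J}$ since two-sided cells are closed under inversion. Because $\mathbf{J}$ is regular in type $A$, distinct left cells inside $\mathbf{J}$ are left-incomparable, so the inequality $x^{-1}\leq_L y$ combined with $y\in\mathbf{J}$ forces $y\sim_L x^{-1}$. Strong regularity of $\mathbf{J}$ then yields $|\mathbf{R}'\cap\mathcal{L}_{x^{-1}}|=1$, which determines $y$ uniquely.

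For indecomposability, I would invoke the $2$-representation framework from Section~\ref{s2}. Consider the additive subcategory $\mathbf{M}$ of $\mathcal{O}_0^{\hat{\mathbf{R}}}$ spanned by all direct summands of $\theta_z L(y)$, $z\in W$. It carries a transitive $\cP$-action, and its unique simple transitive quotient $\overline{\mathbf{M}}$ is, by Theorem~\ref{thm3}(i), equivalent to a cell $2$-representation; matching indecomposable labels and applying Theorem~\ref{thm3}(ii), this must be $\mathbf{C}_{\mathbf{R}'}$. In the cell $2$-representation attached to a strongly regular cell, the indecomposable $1$-morphism $\theta_x$ acts on each indecomposable object as a multiple of a single indecomposable. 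Hence the image of $\theta_x L(y)$ in $\overline{\mathbf{M}}$ is (a multiple of) a single indecomposable.

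The main obstacle is lifting this conclusion back to $\mathcal{O}_0^{\hat{\mathbf{R}}}$: showing that $\theta_x L(y)$ itself is indecomposable, not merely its image in $\overline{\mathbf{M}}$. My plan is to show that the top of $\theta_x L(y)$ is a single simple, which always forces indecomposability. Working with the graded lift $\widetilde{\theta}_x\widetilde{L}(y)$, the normalization $\widetilde{\theta}_x\widetilde{P}(e)=\widetilde{P}(x)$ together with the positivity of the grading on $\mathtt{A}^{\hat{\mathbf{R}}}_{\mathbb{Z}}$ (inherited from the Koszul algebra $\mathtt{A}_{\mathbb{Z}}$) and Lemma~\ref{glll} imply that the head of $\widetilde{\theta}_x\widetilde{L}(y)$ is concentrated in degree zero. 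Computing this degree-zero component via the Hecke algebra identity $[\widetilde{\theta}_x\widetilde{L}(y)]=\underline{H}_x\cdot[\widetilde{L}(y)]$, together with the simple-transitive-quotient analysis above, one identifies the head as a single simple module, completing the argument.
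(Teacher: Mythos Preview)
Your uniqueness argument is correct and essentially identical to the paper's.

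Your indecomposability argument, however, has a genuine gap in the lifting step. The claim that ``the head of $\widetilde{\theta}_x\widetilde{L}(y)$ is concentrated in degree zero'' does not follow from the ingredients you list. The normalization $\widetilde{\theta}_x\widetilde{P}(e)=\widetilde{P}(x)$ says nothing about $\widetilde{L}(y)$; Lemma~\ref{glll} only bounds Loewy length by graded length and gives no information about where generators sit. For a positively graded algebra the head of a graded module lies in the degrees of a minimal generating set, and there is no a priori reason these occupy a single degree. In fact, by \cite[Proposition~1]{Ma1} one has $\min(\widetilde{\theta}_x\widetilde{L}(y))=-\mathbf{a}(y)$ here, not $0$, so even the target degree is misidentified. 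Moreover, knowing the $v^0$-coefficient of $\underline{H}_x\cdot[\widetilde{L}(y)]$ only tells you a graded layer, not the head, and there is no mechanism in your outline that forces this layer (or any other) to be a single simple. Finally, even granting that the image of $\theta_x L(y)$ in the simple transitive quotient $\overline{\mathbf{M}}$ is indecomposable, this does not by itself preclude $\theta_x L(y)$ from splitting in $\mathbf{M}$: passing to a quotient $2$-representation can kill morphisms and identify summands.

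The paper avoids all of this by observing that no lifting is needed. Setting $\mathbf{R}''=\mathcal{R}_x$, the cell $2$-representations for $\mathbf{R}'$ and $\mathbf{R}''$ are equivalent (Theorem~\ref{thm3}\eqref{thm3.2}), and this equivalence carries $L(y)$ to $L(y')$ where $y'$ is the unique element of $\mathcal{L}_{x^{-1}}\cap\mathbf{R}''$, namely the Duflo involution of $\mathbf{R}''$. Then $\theta_x L(y')\cong P^{\hat{\mathbf{R}''}}(x)$ by \cite[Theorem~6]{Ma1}, which is manifestly indecomposable. Since an equivalence of additive categories preserves indecomposability, $\theta_x L(y)$ is indecomposable as well. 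In other words, your $\mathbf{M}$ is \emph{already} equivalent to the cell $2$-representation, so the passage to $\overline{\mathbf{M}}$ and the subsequent lifting are unnecessary detours.
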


\begin{proof}
According to Formula~\eqref{eq1}, the inequality $\theta_x L(y)\neq 0$ is equivalent to 
the inequality $x^{-1}\leq_L y$.
Since $x\sim_J y$ by assumptions, we have $x^{-1}\sim_J y$. Together with $x^{-1}\leq_L y$,
we thus have $x^{-1}\sim_L y$ by regularity of $\mathbf{J}$. Due to strong
regularity of $\mathbf{J}$, we thus have that $y$
is the unique element in $\mathcal{L}_{x^{-1}}\cap\mathbf{R}'$.

Now let $y$ be given as above and let $\mathbf{R}'' = \mathcal{R}_{x}$. 
By \cite[Theorem~43]{MM1} and \cite[Subsection~7.1]{MM1}, 
the cell $2$-representations of $\cP$ corresponding 
to $\mathbf{R}'$ and $\mathbf{R}''$ are equivalent, 
so it suffices to prove that $\theta_x L(y')$ is indecomposable if we take 
$y'\in \mathcal{L}_{x^{-1}}\cap \mathbf{R}''$. However, the unique element in this latter intersection 
is precisely the Duflo involution in $\mathbf{R}''$, and then 
$\theta_x L(y') = P^{\hat{\mathbf{R}}''}(x)$, see \cite[Theorem~6]{Ma1} or \cite[Section~4.5]{MM1}, 
which is indecomposable.
\end{proof}

Apart from the above, we will also need the following lemma.

\begin{lemma}\label{remainstransitive}
Let $\cC$ be a finitary $2$-category and $\cC'$ be an image completion of $\cC$.
Let $\Psi: \cC\to\cC'$ be the corresponding canonical $2$-functor.
Then the pullback, via $\Psi$, of a transitive $2$-representation of $\cC'$
is a transitive $2$-representation of $\cC$.
\end{lemma}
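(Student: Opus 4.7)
The plan is to unpack the definitions and observe that transitivity passes through the image completion essentially for free, because every $1$-morphism of $\cC'$ is by construction a direct summand of a $1$-morphism of the form $\Psi(F)$.

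First I would fix $\mathbf{M}\in\cC'$-amod transitive, write $\mathbf{N}:=\Psi^{*}\mathbf{M}$, and record that for every object $\mathtt{i}$ one has $\mathbf{N}(\mathtt{i})=\mathbf{M}(\mathtt{i})$ as additive, idempotent split, $\mathbb{C}$-linear categories, while for a $1$-morphism $F$ of $\cC$ one has $\mathbf{N}(F)=\mathbf{M}(\Psi(F))$. Next I would pick an object $\mathtt{i}$ and an indecomposable $X\in\mathbf{N}(\mathtt{i})=\mathbf{M}(\mathtt{i})$. The goal is to show that the additive closure inside $\mathbf{N}$ of the collection
\begin{displaymath}
\bigl\{\mathbf{N}(F)\,X \;\big|\; F \text{ a $1$-morphism of }\cC\bigr\}
\end{displaymath}
equals $\mathbf{N}$.

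The key step is then to compare with the known $\cC'$-transitivity. Since $\mathbf{M}$ is transitive as a $2$-representation of $\cC'$, every object $Y\in\mathbf{M}(\mathtt{j})$ lies in the additive closure of $\{\mathbf{M}(G)\,X\mid G \text{ a $1$-morphism of }\cC'\}$. By the very definition of the image completion recalled in Subsection~\ref{s2.5}, every such $G$ is a direct summand of $\Psi(F)$ for some $1$-morphism $F$ of $\cC$. As $\mathbf{M}$ is an additive $2$-functor, $\mathbf{M}(G)\,X$ is then a direct summand of $\mathbf{M}(\Psi(F))\,X=\mathbf{N}(F)\,X$. Because $\mathbf{N}(\mathtt{j})=\mathbf{M}(\mathtt{j})$ is idempotent split, this summand lives in the additive closure of the $\cC$-orbit of $X$ under $\mathbf{N}$. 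Passing to additive closures gives $Y$ in this closure, so $\mathbf{N}$ is transitive.

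There is no real obstacle in this argument; the only point to be careful about is that the notion of transitivity used in \cite{MM5} implicitly allows one to extract indecomposable summands, which is exactly what is available here because of the idempotent splitness of the target categories. Once that is kept in mind, the proof is a one-line translation between the summand-closure in the definition of $\cC'$ and the additive closure in the definition of transitivity.
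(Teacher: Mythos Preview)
Your proposal is correct and follows exactly the same line as the paper's proof: the key (and only) observation is that every $1$-morphism $G$ of $\cC'$ is a direct summand of $\Psi(F)$ for some $1$-morphism $F$ of $\cC$, so the additive closure generated from $X$ under the $\cC$-action coincides with that under the $\cC'$-action. The paper simply compresses this into a single sentence, whereas you spell out the unpacking of the definitions; the content is identical.
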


\begin{proof}
This is clear from the definitions since, for any $1$-morphism $F$ in $\cC'$,
there is a $1$-morphism $G$ in $\cC'$ and a $1$-morphism $H$ in $\cC$
such that $F\oplus G$ is isomorphic to $\Psi(H)$.
\end{proof}

Consider $x\in W$ such that the restriction of ${\theta}_x$ to $\mathcal{O}_0^{\mathfrak{p}}$ is non-zero.
Assume that this restriction decomposes. Let $\overline{\theta}_x$ denote the 
unique indecomposable direct summand of $\theta_x$ such that $\overline{\theta}_x L(y)\neq 0$ 
where $y$ is as in Lemma~\ref{uniquesummand}. Let 
$F_x$ be such that $\theta_x = \overline{\theta}_x \oplus F_x$.

Assume that $F_x$ is non-zero and consider some $z\in W$ such that $F_x L(z)\neq 0$.
Choose $z$ in a two-sided cell such that $\mathbf{a}(z)$ is minimal possible with the property $F_x L(z)\neq 0$. 
Because of Lemma~\ref{uniquesummand} and also our choice of 
$\overline{\theta}_x$, we cannot have $x \sim_R z^{-1}$, so we have $x <_R z^{-1}$
and hence $x<_J z$. Thus, by \cite[Proposition~1]{Ma1}, we have 
$\max({\theta}_x L(z)) < \mathbf{a}(z)$ and $\min({\theta}_x L(z)) > -\mathbf{a}(z)$,
implying the inequality $\ell\ell({F}_x L(z))\leq \ell\ell(\theta_x L(z))< 2\mathbf{a}(z)+1$
by Lemma~\ref{glll} (see Subsection~\ref{s1.7} for definitions of $\min$ and $\max$).

Consider the defining $2$-representation $\mathbf{N}$ of $\cP^{\mathfrak{p}}$. 
Let $\mathbf{M}$ be the induced additive $2$-representation of $\cP^{\mathfrak{p}}$
on the additive closure of all objects of the form $\theta_w L(z)$, where $w\in W$.
Let $\cP_{\mathfrak{p}}$ be the quotient of $\cP$ by the $2$-ideal generated by all $\theta_w$
which annihilate $\mathcal{O}_0^{\mathfrak{p}}$. Then $\cP_{\mathfrak{p}}$ is fiat and
all two-sided cells in $\cP_{\mathfrak{p}}$ are strongly regular, by construction. 
Moreover, by Lemma~\ref{intersectstwosided}, the indexing set for elements of
each two-sided cell of $\cP_{\mathfrak{p}}$ intersects the set $\hat{\mathbf{R}}$.

Now, let $\mathbf{M}'$ be the simple transitive subquotient of $\mathbf{M}$ containing ${F}_x L(z)$.
So far, this is a $2$-representation of $\cP^{\mathfrak{p}}$.
We may consider $\mathbf{M}'$ as a $2$-representation of $\cP_{\mathfrak{p}}$ via the canonical $2$-functor 
$\cP_{\mathfrak{p}}\hookrightarrow \cP^{\mathfrak{p}}$. This is, by construction, a transitive $2$-representation 
of $\cP^{\mathfrak{p}}$ and hence also of $\cP_{\mathfrak{p}}$, by 
Lemma~\ref{remainstransitive}. Let $\underline{\mathbf{M}'}$ be the simple 
transitive quotient of $\mathbf{M}'$, now as a $2$-representation of $\cP_{\mathfrak{p}}$.
From the above estimates and construction, we have the inequalities 
$0< \ell\ell({F}_x L(z)) < 2\mathbf{a}(z)+1$.

By Theorem~\ref{thm3}\eqref{thm3.1}, $\underline{\mathbf{M}'}$ is equivalent 
to a cell $2$-representation of $\cP_{\mathfrak{p}}$ which corresponds to some right KL-cell,
say $\mathcal{R}\subset \hat{\mathbf{R}}$. Since $\ell\ell({F}_x L(z)) < 2\mathbf{a}(z) + 1$, 
we cannot have $\mathbf{a}(\mathcal{R})\geq \mathbf{a}(z)$. Indeed, 
the cell $2$-representation corresponding to $\mathcal{R}$ consists of objects 
of Loewy length $2\mathbf{a}(\mathcal{R}) + 1$ by \cite[Corollary 7]{Ma1}
since this cell $2$-representation is modelled on the category of projective-injective
objects corresponding to $\mathcal{R}$ and they all have Loewy length $2\mathbf{a}(\mathcal{R}) + 1$. 
Therefore this cell $2$-representation cannot contain the object ${F}_x L(z)$ of strictly smaller Loewy length.

On the other hand, by construction,  $\underline{\mathbf{M}'}$ does not annihilate $F_x$
and $z$ is chosen such that, for all $z'\in \hat{\mathbf{R}}$ 
with $\mathbf{a}(z')<\mathbf{a}(z)$, we have $F_x L(z')=0$.
Hence $\mathbf{a}(\mathcal{R})<\mathbf{a}(z)$ is not possible either.
The obtained contradiction shows that $F_x=0$ and completes the proof of Theorem~\ref{thm1}\eqref{thm1.1}.

\subsection{Proof of Theorem~\ref{thm1}\eqref{thm1.2}}\label{s3.2}

Let $F$ and $G$ be two projective functors  on $\mathcal{O}_0^{\mathfrak{p}}$. 
By Theorem~\ref{thm1}\eqref{thm1.1}, we may write
\begin{displaymath}
F \cong \bigoplus_{w\leq_J w_{\mathfrak{p}}}a_w\theta_w\quad\text{ and }
\quad G \cong \bigoplus_{w\leq_J w_{\mathfrak{p}}}b_w\theta_w 
\end{displaymath}
for some non-negative integers $a_w$ and $b_w$. We need to show that, 
if $F$ and $G$ induce the same linear operators on the Grothendieck group $[\mathcal{O}_0^{\mathfrak{p}}]$ of 
$\mathcal{O}_0^{\mathfrak{p}}$, then $a_w = b_w$ for all $w\in W$.

Using Formula~\eqref{eq1} and induction on the two-sided order, 
it is sufficient to consider the case 
\begin{displaymath}
F \cong \bigoplus_{w\in \mathbf{J}}a_w\theta_w\quad\text{ and }
\quad G \cong \bigoplus_{w\in \mathbf{J}}b_w\theta_w, 
\end{displaymath}
where $\mathbf{J}$ is a fixed two-sided KL-cell such that $\mathbf{J}\leq_J w_{\mathfrak{p}}$.
Let $\mathbf{R}'$ be a right KL-cell in $\mathbf{J}\cap\hat{\mathbf{R}}$, which exists
due to Lemma~\ref{intersectstwosided}.

As the classes of simple modules $L(x)$, for $x\in \mathbf{R}'$, are linearly independent in 
$[\mathcal{O}_0^{\mathfrak{p}}]$, it suffices to show that, for $w\in \mathbf{J}$, the matrices
\begin{displaymath}
M_w:=\big([\theta_w L(x):L(y)]\big)_{y,x\in \mathbf{R}'}
\end{displaymath}
are linearly independent. Since $\mathbf{J}$ is strongly regular, using Formula~\eqref{eq1} we see that it is
enough to show that the matrices $M_w$ are linearly independent for $w$ in a fixed left KL-cell
$\mathbf{L}$ of $\mathbf{J}$. Note that, by Formula~\eqref{eq1}, each $M_w$ has a unique non-zero column
(our convention is that columns are indexed by $x$).

Consider the cell $2$-representation $\mathbf{C}_{\mathbf{R}'}$ of $\cP$. Let $\mathtt{Q}_{\mathbf{R}'}$
be the opposite of the endomorphism algebra of the multiplicity free sum of all indecomposable
projective-injective modules in $\mathcal{O}_0^{\hat{\mathbf{R}}'}$. Then 
$\mathbf{C}_{\mathbf{R}'}(\mathtt{i})\cong \mathtt{Q}_{\mathbf{R}'}$-mod.
By \cite[Theorem~43]{MM1}, the functors $\theta_w$, for $w\in\mathbf{L}$, act as projective
functors on $\mathtt{Q}_{\mathbf{R}'}$-mod in the sense of \cite[Section~7.3]{MM1}.
Hence, putting together the non-zero columns of the matrices $M_w$, for
$w\in \mathbf{L}$, produces the Cartan matrix of $\mathtt{Q}_{\mathbf{R}'}$. Therefore we only need
to show that the Cartan matrix of $\mathtt{Q}_{\mathbf{R}'}$ is non-degenerate.

In fact, we claim that $\mathtt{Q}_{\mathbf{R}'}$ is a cellular algebra, in which case the fact that 
its Cartan matrix is non-degenerate follows from \cite[Proposition~1.2]{KX}

To prove that $\mathtt{Q}_{\mathbf{R}'}$ is a cellular algebra, consider another right cell
$\mathbf{R}''$ in $\mathbf{J}$ which we choose such that $\mathbf{R}''$ contains
$w_{\mathfrak{q}}$ for some parabolic subalgebra $\mathfrak{q}$ of $\mathfrak{g}$ containing $\mathfrak{b}$.
This is possible because we are in type $A$. Consider $\mathcal{O}_0^{\mathfrak{q}}$
and let $T$ be a multiplicity free direct sum of all indecomposable projective-injective
objects in $\mathcal{O}_0^{\mathfrak{q}}$. Then the opposite of the endomorphism algebra of $T$ is
isomorphic to $\mathtt{Q}_{\mathbf{R}'}$ by \cite[Theorem~5.4]{MS0}
(see also \cite[Theorem~18]{MS} or \cite[Theorem~43]{MM1}).
On the other hand, the associative algebra $\mathtt{A}^{\mathbf{R}''}$ of 
$\mathcal{O}_0^{\mathfrak{q}}$ is quasi-hereditary with simple preserving duality.  
In particular,  $\mathtt{A}^{\mathbf{R}''}$ 
is cellular by  \cite[Corollary~4.2]{KX0}. As the duality fixes projective-injective modules,
it fixes $T$ and hence the endomorphism algebra of $T$ is cellular by \cite[Proposition~4.3]{KX0}.
This completes the proof of Theorem~\ref{thm1}\eqref{thm1.2}.

\section{Action of projective functors on simple modules}\label{s4}

\subsection{Action of projective functors on simple modules}\label{s4.1}

Using Formula~\eqref{eq1}, the statement of Conjecture~\ref{conjecture2} has a more precise reformulation. 

{\bf Conjecture~\ref{conjecture2}'.}
{\em For $\mathfrak{g}=\mathfrak{sl}_n(\mathbb{C})$, let $x,y\in W$ be such that $x^{-1}\leq_L y$. 
Then $\theta_x L(y)$ is indecomposable.}

We note that the conjectured statement is not extendable outside type $A$. 
For example, $\theta_{st}L(ts)\cong \theta_{t}L(t)\oplus \theta_{t}L(tst)$ is decomposable in type $B_2$.

\subsection{$J$-comparable indices}\label{s4.2}

Our first observation is that Conjecture~\ref{conjecture2}' is true in the case 
$x\sim_J y$ by Lemma~\ref{uniquesummand}.

\subsection{Translation through a wall}\label{s4.3}

The following claim is fairly well-known to experts 
but we failed to find a proper reference.

\begin{proposition}\label{prop21}
Conjecture~\ref{conjecture2}' is true if $x$ is the longest element in some parabolic subgroup of $W$,
moreover, the corresponding statement is true for $\mathfrak{g}$ of any type. 
\end{proposition}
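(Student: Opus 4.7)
The plan is to realize $\theta_{w_0^{\mathfrak{q}}}$ as a translation through the wall determined by $\mathfrak{q}$ and then exploit biadjunction to show that the result has simple socle. Concretely, I would fix an integral weight $\mu$ whose dot-stabilizer equals $W_{\mathfrak{q}}$, and let $T_0^\mu\colon\mathcal{O}_0 \to \mathcal{O}_\mu$ and $T_\mu^0\colon \mathcal{O}_\mu\to\mathcal{O}_0$ be the corresponding translation functors. The first step is the identification
\[
\theta_{w_0^{\mathfrak{q}}} \;\cong\; T_\mu^0\, T_0^\mu,
\]
which follows from \cite{BG} by checking that both sides send $P(e)$ to $P(w_0^{\mathfrak{q}})$, and which is valid in any type.

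Next I would recall the standard dichotomy: $T_0^\mu L(y)$ is either zero or equals the simple singular module $L_\mu(y\cdot\mu)$, the latter occurring precisely when $y$ is the longest representative of its $W_{\mathfrak{q}}$-coset. The hypothesis $x^{-1}\leq_L y$ for $x = w_0^{\mathfrak{q}}$ unwinds, via Formula~\eqref{eq1}, to exactly this coset condition. Hence $\theta_x L(y) \cong T_\mu^0 L_\mu(y\cdot\mu)$, and it suffices to prove that $T_\mu^0$ applied to any simple singular module has simple socle.

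For the socle computation I would use that translation functors are biadjoint (a consequence of self-duality of the finite dimensional $\mathfrak{g}$-module defining them); then for any $z\in W$,
\[
\mathrm{Hom}_{\mathcal{O}_0}\bigl(L(z),\,T_\mu^0 L_\mu(y\cdot\mu)\bigr)\;\cong\;\mathrm{Hom}_{\mathcal{O}_\mu}\bigl(T_0^\mu L(z),\,L_\mu(y\cdot\mu)\bigr).
\]
The right-hand side vanishes unless $T_0^\mu L(z) = L_\mu(z\cdot\mu)$ is nonzero, in which case it is one-dimensional iff $z\cdot\mu = y\cdot\mu$. Both $y$ and $z$ would then be longest representatives of the same $W_{\mathfrak{q}}$-coset, so uniqueness forces $z = y$. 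The socle of $\theta_x L(y)$ is therefore the simple module $L(y)$, which makes $\theta_x L(y)$ indecomposable.

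The only real obstacle is bookkeeping: fixing the conventions for left versus right cosets and descent sets so that the coset condition genuinely matches $x^{-1}\leq_L y$, and nailing down the normalization in the identification $\theta_{w_0^{\mathfrak{q}}}\cong T_\mu^0 T_0^\mu$. None of these ingredients uses type $A$, which is why the same argument establishes the statement for $\mathfrak{g}$ of arbitrary type.
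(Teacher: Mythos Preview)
Your proposal is correct and follows essentially the same route as the paper: identify $\theta_{w_0^{\mathfrak{q}}}$ with a translation through the wall, use that translation to the wall sends simples to simples or zero, and then apply biadjunction to force indecomposability. The only cosmetic difference is that the paper uses the other adjunction to conclude that the resulting module has simple \emph{top}, whereas you compute a simple \emph{socle}; both arguments are equally valid and neither uses type $A$.
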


\begin{proof}
If  $x$ is the longest element in some parabolic subgroup, then $\theta_x$ is the translation 
through the intersection of walls which correspond to all simple reflections for 
this parabolic subgroup. For simplicity, we will simply say ``a wall'' instead of 
``intersection of all walls''.

As translation to a wall sends simple modules to simple modules 
or zero (because of $1$-dimensionality of the highest weight), 
by adjunction, translation from a wall (which is biadjoint to the translation to a wall) 
sends a simple module to a module with simple top, in 
particular, to an indecomposable module. Therefore translation through the wall, which is 
the composition of a translation to a wall and from a wall, sends a simple module to an
indecomposable module (or zero).
\end{proof}

\subsection{Projectives in $\mathcal{O}_0^{\mathfrak{p}}$}\label{s4.4}

Let $\mathfrak{p}$ be a parabolic subalgebra of $\mathfrak{g}$ containing  $\mathfrak{b}$. 
The module $P^{\mathfrak{p}}(e)$ has simple socle by \cite[Lemma~4.7]{MS0}.  
We denote this socle by $L(d_{\mathfrak{p}})$. If $\mathbf{R}$ is the right KL-cell in $W$
such that $\mathcal{O}_0^{\mathfrak{p}}=\mathcal{O}_0^{\hat{\mathbf{R}}}$, then 
$d_{\mathfrak{p}}$ is the Duflo involution in $\mathbf{R}$, see \cite[Corollary~3]{Ma1}.
The following claim is fairly well-known to experts 
but we failed to find a proper reference.

\begin{proposition}\label{prop22}
Conjecture~\ref{conjecture2}' is true if $y=d_{\mathfrak{p}}$ for some $\mathfrak{p}$,
moreover, the corresponding statement is true for $\mathfrak{g}$ of arbitrary type. 
\end{proposition}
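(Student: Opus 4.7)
The approach is to show that $\theta_x L(d_{\mathfrak{p}})$ is both self-dual under the star duality $\star$ and has simple head, which together force indecomposability. This mimics Proposition~\ref{prop21} in a dual direction: there, the element $x$ had a special form making $\theta_x$ a translation through a wall; here, the simple $L(d_{\mathfrak{p}})$ has a special form (it is the socle of the dominant projective in $\mathcal{O}_0^{\mathfrak{p}}$ and arises naturally via translation to the $\mathfrak{p}$-wall) which we will exploit.

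Self-duality is immediate: projective functors commute with $\star$ (they are direct summands of tensoring with self-dual finite-dimensional $\mathfrak{g}$-modules composed with block projection, and both operations commute with $\star$), and $L(d_{\mathfrak{p}})^\star\cong L(d_{\mathfrak{p}})$, so $(\theta_x L(d_{\mathfrak{p}}))^\star\cong\theta_x L(d_{\mathfrak{p}})$ and its head and socle agree as semisimple modules. To establish that the head is simple, let $\mu$ be an integral weight whose dot-stabilizer in $W$ equals $W_{\mathfrak{p}}$, and let $T^0_\mu\colon\mathcal{O}_0\to\mathcal{O}_\mu$ and $T^\mu_0\colon\mathcal{O}_\mu\to\mathcal{O}_0$ be the biadjoint translation functors to and from the $\mathfrak{p}$-wall. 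Since $d_{\mathfrak{p}}\in W^{\mathfrak{p}}$, the module $L_\mu:=T^0_\mu L(d_{\mathfrak{p}})$ is a nonzero simple in $\mathcal{O}_\mu$. Projective functors intertwine with translation via a natural isomorphism $T^0_\mu\theta_x\cong\theta^\mu_{\overline{x}}T^0_\mu$, where $\theta^\mu_{\overline{x}}$ is the analogous projective functor on $\mathcal{O}_\mu$ (both are built from tensoring with the same finite-dimensional $\mathfrak{g}$-module). Combining this with the biadjunction $\theta_x\dashv\theta_{x^{-1}}$, the head of $\theta_x L(d_{\mathfrak{p}})$ is computed by
\begin{displaymath}
\dim\mathrm{Hom}(\theta_x L(d_{\mathfrak{p}}),L(z))=\dim\mathrm{Hom}_{\mathcal{O}_\mu}(L_\mu,\theta^\mu_{\overline{x^{-1}}}T^0_\mu L(z));
\end{displaymath}
since $T^0_\mu L(z)$ is either zero or a simple in $\mathcal{O}_\mu$, this reduces the problem to computing socle multiplicities of $L_\mu$ in projective-functor-images of simples in the singular block.

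The hard part, which I expect to be the main obstacle, is to show that $L_\mu$ appears in the socle of $\theta^\mu_{\overline{x^{-1}}}T^0_\mu L(z)$ for exactly one $z$, with multiplicity one. Since $L_\mu=T^0_\mu L(d_{\mathfrak{p}})$ inherits a distinguished ``translated Duflo'' role in $\mathcal{O}_\mu$, I expect this to be tractable either by iterating the translation argument and peeling off walls until the singular block is of sufficiently small rank to be handled by hand, or by a direct Kazhdan--Lusztig combinatorial analysis of the singular block forcing distinct simple inputs to yield distinct socle constituents. As a secondary route, one may attempt to realize $L(d_{\mathfrak{p}})$ as a Zuckerman image of the form $Z^{\mathfrak{p}}(\theta_{w_0^{\mathfrak{p}}}L(w_0))$ and use the commutation $Z^{\mathfrak{p}}\theta_x\cong\theta_xZ^{\mathfrak{p}}$ to reduce the problem to the $\mathfrak{p}=\mathfrak{b}$ case, where $L(w_0)=\Delta(w_0)=P(w_0)$ is antidominant and the projective-functor-image is amenable to direct Hecke-algebraic analysis.
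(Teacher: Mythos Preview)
Your proposal has a genuine gap: you explicitly leave the ``hard part'' unfinished. Reducing to the singular block $\mathcal{O}_\mu$ does not simplify the problem in any evident way, and your two suggested routes (induction by peeling off walls, or direct KL combinatorics) are not fleshed out. Moreover, the displayed identity
\[
\dim\mathrm{Hom}(\theta_x L(d_{\mathfrak{p}}),L(z))=\dim\mathrm{Hom}_{\mathcal{O}_\mu}(L_\mu,\theta^\mu_{\overline{x^{-1}}}T^0_\mu L(z))
\]
is not justified by the adjunctions you invoke. Unwinding the biadjunction $(T^0_\mu,T^\mu_0)$ together with the intertwining $T^\mu_0\theta^\mu_{\overline{x^{-1}}}\cong\theta_{x^{-1}}T^\mu_0$ gives
\[
\mathrm{Hom}_{\mathcal{O}_\mu}(L_\mu,\theta^\mu_{\overline{x^{-1}}}T^0_\mu L(z))\cong
\mathrm{Hom}_{\mathcal{O}_0}(L(d_{\mathfrak{p}}),\theta_{x^{-1}}\theta_{w_0^{\mathfrak{p}}}L(z)),
\]
which carries an extra factor of $\theta_{w_0^{\mathfrak{p}}}$ and is in general strictly larger than the left-hand side. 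So even the reduction step is not correct as stated.

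The paper's proof proceeds along an entirely different line and avoids any head/socle computation. The key identification is $L(d_{\mathfrak{p}})=\mathrm{Tr}_Q(P^{\mathfrak{p}}(e))$, where $Q$ is the maximal injective summand of a multiplicity-free projective generator $P$ of $\mathcal{O}_0^{\mathfrak{p}}$. Since projective functors preserve projective-injectives and are self-adjoint up to $(-)^{-1}$, they commute with $\mathrm{Tr}_Q$, so $\theta_x L(d_{\mathfrak{p}})\cong\mathrm{Tr}_Q(P^{\mathfrak{p}}(x))$. Stroppel's double centralizer theorem (\cite[Theorem~10.1]{St2}) says that $\mathrm{Hom}_{\mathcal{O}}(Q,-)$ is full and faithful on projectives in $\mathcal{O}_0^{\mathfrak{p}}$; this forces the restriction map $\mathrm{End}(P^{\mathfrak{p}}(x))\to\mathrm{End}(\mathrm{Tr}_Q(P^{\mathfrak{p}}(x)))$ to be an isomorphism. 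Hence $\theta_x L(d_{\mathfrak{p}})$ has local endomorphism ring and is indecomposable. This argument is type-independent and requires no case analysis.
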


\begin{proof}
Let $P$ be a multiplicity free projective generator of $\mathcal{O}_0^{\mathfrak{p}}$ 
and $Q$ be the maximal injective summand of $P$. Let $\mathtt{Q}$ be the opposite of the
endomorphism algebra of $Q$. By \cite[Theorem~10.1]{St2}, the functor 
$\mathrm{Hom}_{\mathcal{O}}(Q,{}_-)$ from $\mathcal{O}_0^{\mathfrak{p}}$
to $\mathtt{Q}$-mod is full and faithful on projective modules in 
$\mathcal{O}_0^{\mathfrak{p}}$.

Consider the trace $\mathrm{Tr}_Q(P)$ of $Q$ in $P$, that is the submodule of $P$ generated by all images
of $Q$ in $P$. Each endomorphism of $P$ restricts to $\mathrm{Tr}_Q(P)$, moreover, the
previous paragraph guarantees that this restriction map induces an isomorphism
\begin{displaymath}
\mathrm{End}_{\mathcal{O}}\big(P\big)\cong \mathrm{End}_{\mathcal{O}}\big(\mathrm{Tr}_Q(P)\big).
\end{displaymath}
Since projective functors are adjoint to projective functors and preserve projective-injective modules,
we have $\mathrm{Tr}_Q(\theta\, M)\cong \theta\,\mathrm{Tr}_Q(M)$ for any projective functor $\theta$
and any $M\in\mathcal{O}_0^{\mathfrak{p}}$. 

Put together, the above implies that the endomorphism algebra of the module
\begin{displaymath}
\theta_x L(d_{\mathfrak{p}}) =\theta_x \mathrm{Tr}_Q(P^{\mathfrak{p}}(e))
\cong \mathrm{Tr}_Q(\theta_x P^{\mathfrak{p}}(e))=
\mathrm{Tr}_Q(P^{\mathfrak{p}}(x))
\end{displaymath}
and the endomorphism algebra of  the module $P^{\mathfrak{p}}(x)$ are isomorphic. 
Therefore $\theta_x L(d_{\mathfrak{p}})$ is indecomposable as $P^{\mathfrak{p}}(x)$ is.
\end{proof}

\subsection{Tilting modules in $\mathcal{O}_0^{\mathfrak{p}}$}\label{s4.5}

Let $\mathfrak{p}$ be a parabolic subalgebra of $\mathfrak{g}$ containing  $\mathfrak{b}$,
$W_{\mathfrak{p}}$ the corresponding parabolic subgroup of $W$, $w_0^{\mathfrak{p}}$ the
longest element in $W_{\mathfrak{p}}$  and $w_{\mathfrak{p}}=w_0^{\mathfrak{p}}w_0$.
The following claim is fairly well-known to experts 
but we failed to find a proper reference.

\begin{proposition}\label{prop23}
Conjecture~\ref{conjecture2}' is true if $y=w_{\mathfrak{p}}$ for some $\mathfrak{p}$,
moreover, the corresponding  statement is true for $\mathfrak{g}$ of arbitrary type. 
\end{proposition}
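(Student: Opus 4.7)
My plan is to run a tilting-module analogue of the argument used for Proposition~\ref{prop22}.

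First, I would identify $L(w_{\mathfrak{p}})$ inside the parabolic category. Since $w_{\mathfrak{p}}=w_0^{\mathfrak{p}}w_0$ is the longest element of $W^{\mathfrak{p}}$ and maximal there in the Bruhat order, the ordinary Verma $\Delta(w_{\mathfrak{p}})$ has composition factors only among $L(y)$ with $y\geq w_{\mathfrak{p}}$ in Bruhat, and of these the only one belonging to $W^{\mathfrak{p}}$ is $w_{\mathfrak{p}}$ itself. Hence
\begin{displaymath}
\Delta^{\mathfrak{p}}(w_{\mathfrak{p}})=Z^{\mathfrak{p}}\Delta(w_{\mathfrak{p}})\cong L(w_{\mathfrak{p}}),
\end{displaymath}
and applying the star duality also yields $\nabla^{\mathfrak{p}}(w_{\mathfrak{p}})\cong L(w_{\mathfrak{p}})$, so $L(w_{\mathfrak{p}})\cong T^{\mathfrak{p}}(w_{\mathfrak{p}})$ is an indecomposable parabolic tilting module.

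Second, projective functors are exact, commute with $\star$ up to isomorphism, and preserve the class of modules admitting a parabolic Verma filtration; the last follows from the standard fact for ordinary $\mathcal{O}_0$ together with the natural isomorphism $\theta_x\circ Z^{\mathfrak{p}}\cong Z^{\mathfrak{p}}\circ\theta_x$ coming from the adjunction between $Z^{\mathfrak{p}}$ and inclusion. Applying $\theta_x$ to the identifications from the previous step shows that $\theta_x L(w_{\mathfrak{p}})$ has both a parabolic Verma and a parabolic dual Verma filtration, and is therefore a parabolic tilting module, i.e.\ $\theta_x L(w_{\mathfrak{p}})\cong\bigoplus_{y\in W^{\mathfrak{p}}}T^{\mathfrak{p}}(y)^{\oplus m_y}$ for non-negative integers $m_y$.

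Third, to upgrade this to indecomposability I would pass to the Ringel dual. Let $T=\bigoplus_{y\in W^{\mathfrak{p}}}T^{\mathfrak{p}}(y)$ be the characteristic tilting module of $\mathcal{O}_0^{\mathfrak{p}}$ and $\mathtt{R}^{\mathfrak{p}}=\mathrm{End}_{\mathcal{O}}(T)^{\mathrm{op}}$ the Ringel dual algebra. The functor $\mathrm{Hom}_{\mathcal{O}}(T,-)$ is fully faithful on the full additive subcategory of parabolic tiltings and identifies it with the full additive subcategory of projective $\mathtt{R}^{\mathfrak{p}}$-modules, so
\begin{displaymath}
\mathrm{End}_{\mathcal{O}}\big(\theta_x L(w_{\mathfrak{p}})\big)\cong \mathrm{End}_{\mathtt{R}^{\mathfrak{p}}}\big(\mathrm{Hom}_{\mathcal{O}}(T,\theta_x L(w_{\mathfrak{p}}))\big),
\end{displaymath}
reducing the problem to showing that the right-hand side is a local algebra, or equivalently that the corresponding projective $\mathtt{R}^{\mathfrak{p}}$-module is indecomposable. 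Under this identification $L(w_{\mathfrak{p}})=T^{\mathfrak{p}}(w_{\mathfrak{p}})$ maps to an indecomposable projective located at the minimum of the reversed quasi-hereditary order on $\mathtt{R}^{\mathfrak{p}}$, which puts us formally in the situation of Proposition~\ref{prop22}, with $L(w_{\mathfrak{p}})$ playing on the Ringel dual side the role of the socle of the dominant parabolic projective. The trace argument from the proof of Proposition~\ref{prop22} can then be applied to conclude.

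The main obstacle is precisely the last step: one has to transfer the action of $\theta_x$ through Ringel duality to a projective-functor-like action on $\mathtt{R}^{\mathfrak{p}}$-mod, and to verify that the identification of $L(w_{\mathfrak{p}})$ with an indecomposable projective there satisfies the hypotheses of Proposition~\ref{prop22} (or of a version of it adapted to the Ringel dual setting). Once this transfer of structure is made precise, the indecomposability of $\theta_x L(w_{\mathfrak{p}})$ follows.
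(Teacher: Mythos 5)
Your first two steps are correct: $L(w_{\mathfrak{p}})\cong\Delta^{\mathfrak{p}}(w_{\mathfrak{p}})\cong\nabla^{\mathfrak{p}}(w_{\mathfrak{p}})$ is the indecomposable tilting module at the maximal element of $W^{\mathfrak{p}}$, and, since projective functors are exact, commute with $\star$ and preserve (dual) parabolic Verma flags, $\theta_x L(w_{\mathfrak{p}})$ is a direct sum of indecomposable tilting modules in $\mathcal{O}_0^{\mathfrak{p}}$. This, however, is exactly where the content of the proposition starts, and your third step does not supply it. Passing to the Ringel dual only renames the problem: the functor $\mathrm{Hom}_{\mathcal{O}}(T,{}_-)$ identifies the additive category of tilting modules with the projectives over $\mathtt{R}^{\mathfrak{p}}$, but to control the projective module $\mathrm{Hom}_{\mathcal{O}}(T,\theta_x L(w_{\mathfrak{p}}))$ you must know which endofunctor of $\mathtt{R}^{\mathfrak{p}}$-mod corresponds to $\theta_x$; that requires (a) identifying the Ringel dual of $\mathcal{O}_0^{\mathfrak{p}}$ with a block of (parabolic) category $\mathcal{O}$ and (b) proving that this identification intertwines projective functors with projective functors. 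Neither is established in your sketch, and you acknowledge as much. Note also that Proposition~\ref{prop22} is not the right template: under Ringel duality $L(w_{\mathfrak{p}})$ corresponds to the \emph{dominant indecomposable projective}, so on the dual side the desired statement is simply that an indecomposable projective functor applied to the dominant projective is an indecomposable projective (immediate from the classification of projective functors), not the double-centralizer/trace argument of Proposition~\ref{prop22}, which concerns the simple socle of $P^{\mathfrak{p}}(e)$.

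The missing transfer is precisely what the paper's proof constructs explicitly: the derived twisting functor $\mathcal{L}T_{w_0}$ is a self-equivalence of the derived category of $\mathcal{O}_0$ which commutes with projective functors \cite{AS}, and applying it to Lepowsky's resolution \eqref{eqnew123} of $P^{\mathfrak{q}}(e)=\Delta^{\mathfrak{q}}(e)$, for $\mathfrak{q}$ with $w_0^{\mathfrak{q}}=w_0w_0^{\mathfrak{p}}w_0$, shows that $\mathcal{L}T_{w_0}P^{\mathfrak{q}}(e)$ is, up to a homological shift, the module $L(w_{\mathfrak{p}})$. Hence $\theta_x L(w_{\mathfrak{p}})\cong\mathcal{L}T_{w_0}P^{\mathfrak{q}}(x)$ up to shift, which is indecomposable because equivalences preserve indecomposability. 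To complete your argument you would have to prove an equally concrete compatibility of Ringel duality with projective functors (for instance via \cite{MS0} or \cite{CM}); as written, the decisive step is a stated obstacle rather than a proof.
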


\begin{proof}
Let $\mathfrak{q}$ be a parabolic subalgebra of $\mathfrak{g}$ containing  $\mathfrak{b}$
such that $w_0^{\mathfrak{q}}=w_0w_0^{\mathfrak{p}}w_0$.
Note that $w_0^{\mathfrak{p}}=w_0w_0^{\mathfrak{q}}$.

For a positive integer $i$, denote by $W_{\mathfrak{q}}^i$ the set of all elements in $W_{\mathfrak{q}}$
of length $i$ and write $m$ for the length of $w_0^{\mathfrak{q}}$.
By \cite[Section~4]{Lep}, the module $P^{\mathfrak{q}}(e)=\Delta^{\mathfrak{q}}(e)$ has a 
BGG type resolution of the following form:
\begin{equation}\label{eqnew123}
0\to\Delta(w_0^{\mathfrak{q}})\to
\bigoplus_{w\in W_{\mathfrak{q}}^{m-1}}\Delta(w)\to
\bigoplus_{w\in W_{\mathfrak{q}}^{m-2}}\Delta(w)\to
\dots\to \Delta(e)\to \Delta^{\mathfrak{q}}(e)\to 0.
\end{equation}

Consider the derived twisting functor $\mathcal{L}T_{w_0}$.
By \cite[Theorem~2.2]{AS}, the complex $\mathcal{L}T_{w_0}\,\Delta(w)$
is, in fact, a module, for all $w\in W$. For any reduced decomposition
$w_0=s_{1}s_2\cdots s_k$, we have
\begin{displaymath}
\mathcal{L}T_{w_0}\cong\mathcal{L}T_{s_1}\circ
\mathcal{L}T_{s_2}\circ\cdots\circ\mathcal{L}T_{s_k},
\end{displaymath}
see e.g. \cite[Remark~4.3(4)]{MS0}. In particular, we have
$\mathcal{L}T_{w_0}\cong \mathcal{L}T_{x}\circ\mathcal{L}T_{y}$, for any 
$x,y\in W$ such that $xy=w_0$ and $l(x)+l(y)=l(w_0)$. Note that we can write 
\begin{displaymath}
w_0=(w_0ww_0^{-1})(w_0w^{-1}) 
\end{displaymath}
and $l(w_0)=l(w_0ww_0^{-1})+l(w_0w^{-1})$.
Combining the above with \cite[Formula (2.3) and Theorem~2.3]{AS}, we see that
\begin{displaymath}
\begin{array}{rcl}
\mathcal{L}T_{w_0}\,\Delta(w)&\cong& 
\mathcal{L}T_{w_0ww_0^{-1}}\circ \mathcal{L}T_{w_0w^{-1}}\,\Delta(w)\\
&\cong& 
\mathcal{L}T_{w_0ww_0^{-1}}\,\Delta(w_0)\\
&\cong&\nabla(w_0w).\\
\end{array}
\end{displaymath}
Therefore, $\mathcal{L}T_{w_0}$ maps Verma modules to dual Verma modules.
Thus, applying $\mathcal{L}T_{w_0}$ to the resolution in \eqref{eqnew123}, 
produces a coresolution of $L(w_0^{\mathfrak{p}})$ by dual Verma modules
(see, for example, the proof of \cite[Proposition~4.4]{MS0} for details).

Applying $\theta_x$ to $P^{\mathfrak{q}}(e)$, produces (if non-zero) an indecomposable 
projective module in  $\mathcal{O}_0^{\mathfrak{q}}$. Since twisting commutes with projective functors,
see \cite[Theorem~3.2]{AS}, and, being a self-equivalence of the derived category of $\mathcal{O}_0$ 
by \cite[Corollary~4.2]{AS},  preserves indecomposability, the claim follows.
\end{proof}

The modules of the form $\theta_x\, L(w_{\mathfrak{p}})$ are exactly the indecomposable tilting modules
in the category $\mathcal{O}_0^{\mathfrak{p}}$. We refer the reader to \cite{MS0} and \cite{CM} for 
more details on the techniques used in the above proof and further results in this direction.

\subsection{General reduction to involutions}\label{s4.6}

\begin{proposition}\label{prop24}
Conjecture~\ref{conjecture2}' is true if and only if it is true for all $x$ and $y$
such that $y^2=e$.
\end{proposition}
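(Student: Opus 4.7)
One direction is immediate: if Conjecture~\ref{conjecture2}' holds for all $y\in W$, it holds in particular when $y^2=e$.

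For the converse, my plan is to induct on the two-sided cell $\mathcal{J}_y$ in the Kazhdan-Lusztig two-sided order. Given $x,y\in W$ with $x^{-1}\leq_L y$ and $y^2\neq e$, assume the conjecture is already known both for all involutions and for every pair whose second entry lies in a two-sided cell strictly below $\mathcal{J}_y$. By Subsection~\ref{s4.2}, the case $x\sim_J y$ is already settled via Lemma~\ref{uniquesummand}, so it suffices to treat $x<_J y$.

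Let $d$ be the Duflo involution of $\mathcal{R}_y$. Then $d^2=e$, $d\sim_R y$, and the computation in the proof of Lemma~\ref{uniquesummand} gives $\theta_y L(d)\cong P^{\hat{\mathcal{R}}_y}(y)$, whose head is $L(y)$. Decomposing $\theta_x\theta_y\cong\bigoplus_w m_w\theta_w$ into indecomposables and applying to $L(d)$ produces
\begin{displaymath}
\theta_x\theta_y L(d)\cong\bigoplus_w m_w\,\theta_w L(d),
\end{displaymath}
in which every non-zero summand is indecomposable by the involution hypothesis. On the other hand, the composition series of $P^{\hat{\mathcal{R}}_y}(y)$ exhibits $\theta_x\theta_y L(d)$ as filtered by modules of the form $\theta_x L(y')$, with $y'=y$ (the distinguished piece we want) or $y'$ belonging to $\mathcal{R}_y\setminus\{y\}$ or to strictly smaller two-sided cells. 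Reconciling these two descriptions should identify $\theta_x L(y)$ with a single indecomposable summand $\theta_w L(d)$ and hence prove it is indecomposable.

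The hard part will be exactly this reconciliation, because filtrations do not automatically split and because composition factors $L(y')$ with $y'\in\mathcal{R}_y\setminus\{y\}$ lie in the same two-sided cell as $y$ and are therefore not handled by the induction on $\mathcal{J}$. I anticipate that this can be bypassed either by (a) choosing a more refined $z$ (in place of $z=y$) so that $\theta_z L(d)$ has, modulo strictly smaller cells, exactly one composition factor $L(y)$ from $\mathcal{J}_y$---the existence of such a $z$ being a purely combinatorial Kazhdan-Lusztig statement about the right cell module for $\mathcal{R}_y$---or (b) replacing the ambient category $\mathcal{O}_0$ by the cell $2$-representation $\mathbf{C}_{\mathcal{R}_y}$, where lower-cell contributions are collapsed by construction, proving the desired statement there using simple transitivity (Theorem~\ref{thm3}), and then lifting it back to $\mathcal{O}_0$ by means of the biadjointness $\theta_x^{*}\cong\theta_{x^{-1}}$ together with the $\mathbf{a}$-function degree bounds from \cite[Proposition~1]{Ma1} already exploited in Subsection~\ref{s3.1} and the graded framework of Subsection~\ref{s1.8}.
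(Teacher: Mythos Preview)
Your proposal is not a proof; it is a strategy with an acknowledged gap, and neither of your suggested routes (a) or (b) is carried out. The ``reconciliation'' step is precisely the content of the proposition, so postponing it to a speculative combinatorial statement or to an unspecified lifting argument leaves the claim unproved. In particular, in (b) the difficulty is exactly that indecomposability in the cell $2$-representation $\mathbf{C}_{\mathcal{R}_y}$ does not obviously lift back to indecomposability in $\mathcal{O}_0$; the $\mathbf{a}$-function bounds from \cite[Proposition~1]{Ma1} control graded lengths, not idempotents, and biadjointness of $\theta_x$ does not by itself prevent $\theta_x L(y)$ from splitting along lower cells.

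The paper's argument avoids all of this by choosing a different involution and invoking a single external fact. Instead of the Duflo involution of the \emph{right} cell $\mathcal{R}_y$, take $d$ to be the Duflo involution of the \emph{left} cell $\mathcal{L}_y$. Then $d\sim_L y$, so $\mathcal{R}_d$ and $\mathcal{R}_y$ are two right cells in the same two-sided cell. By \cite[Proposition~35]{MS}, the cell $2$-representations of $\cP$ attached to $\mathcal{R}_y$ and $\mathcal{R}_d$ are equivalent, and this equivalence sends $L(y)$ to $L(d)$. Since an equivalence of $2$-representations intertwines the action of every $\theta_x$ and preserves indecomposability, $\theta_x L(y)$ is indecomposable if and only if $\theta_x L(d)$ is. No induction, no filtration bookkeeping, no lifting step is needed: the whole reduction is a one-line transport along a known equivalence. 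What your approach (b) was groping toward is essentially this, but the crucial point you missed is that the relevant equivalence already lives at the level where $L(y)$ and $L(d)$ are honest objects, so there is nothing to lift.
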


\begin{proof}
Let $x,w\in W$ 
with $x^{-1}\leq_R w$ and let $y\in W$ be the Duflo involution in the left KL-cell of $w$. 
By \cite[Proposition~35]{MS}, the cell $2$-representations of $\cP$ corresponding to the 
right KL-cells $\mathcal{R}_w$ and $\mathcal{R}_y$ are equivalent and this equivalence swaps $L(w)$ and $L(y)$.
Thus, $\theta_x L(w)$ is indecomposable if and only if $\theta_x L(y)$ is. The claim follows.
\end{proof}

Another way to formulate Proposition~\ref{prop24} is to say that 
the property that Conjecture~\ref{conjecture2}' is true for all $x$
is an invariant of left KL-cells with respect to $y$.

\subsection{Connection to the double centralizer property}\label{s4.7}

Let $\mathbf{R}$ be a right KL-cell and $d\in \mathbf{R}$ the corresponding Duflo 
involution. Let $P$ be a multiplicity free projective generator  in 
$\mathcal{O}_0^{\hat{\mathbf{R}}}$  and $Q$ be the maximal injective summand of $P$.  
Let $\mathtt{Q}$ be the opposite of the endomorphism algebra of $Q$. 
The proof of Proposition~\ref{prop22} implies that, if
the functor $\mathrm{Hom}_{\mathcal{O}}(Q,{}_-)$ from $\mathcal{O}_0^{\hat{\mathbf{R}}}$
to $\mathtt{Q}$-mod is full and faithful on projective modules  in  $\mathcal{O}_0^{\hat{\mathbf{R}}}$
(this property is equivalent to a certain {\em double centralizer property}, see for example
\cite[Section~3.4]{Ma1}), then Conjecture~\ref{conjecture2}' 
is true  for $y=d$ and for any $x$.

By \cite[Theorem~11]{Ma1}, the condition that $\mathrm{Hom}_{\mathcal{O}}(Q,{}_-)$
is full and faithful on projective modules in  $\mathcal{O}_0^{\hat{\mathbf{R}}}$
is equivalent to the condition that Kostant's problem has the positive 
solution for $L(d)$ (see \cite{KM} for more details on Kostant's problem).
We refer the reader to \cite{KM,Ma3,Ka} for many examples of elements for which 
Kostant's problem has the positive solution.
However, as it is shown in  \cite{KM}, Kostant's problem can have the negative solution
for some $d$, even in type $A$ (the smallest example exists for the algebra $\mathfrak{sl}_4$).

\subsection{Further speculations}\label{s4.8}

Assume that we are in the situation as in the previous subsection, but such that 
there is no double centralizer property for our choice of $d$. Then the restriction map  
\begin{displaymath}
\mathrm{End}_{\mathcal{O}}\big(P\big)\to\mathrm{End}_{\mathcal{O}}\big(\mathrm{Tr}_Q(P)\big)
\end{displaymath}
is still injective but no longer surjective. By construction, the algebra 
$\mathrm{End}_{\mathcal{O}}\big(P\big)$ is positively graded, moreover,
$\mathrm{Tr}_Q(P)$ is a graded submodule of $P$. Therefore the algebra 
$\mathrm{End}_{\mathcal{O}}\big(\mathrm{Tr}_Q(P)\big)$ is a graded algebra.

To prove Conjecture~\ref{conjecture2}' , it would be sufficient to show that 
all homogeneous components of the graded quotient space 
\begin{displaymath}
\mathrm{End}_{\mathcal{O}}\big(\mathrm{Tr}_Q(P)\big)/ \mathrm{End}_{\mathcal{O}}\big(P\big)
\end{displaymath}
have strictly positive degrees. Indeed, in such a case these new components would only 
contribute to the Jacobson radical of $\mathrm{End}_{\mathcal{O}}\big(\mathrm{Tr}_Q(P)\big)$ 
and hence no essentially new idempotents can be created.

\noindent
Tobias Kildetoft, Department of Mathematics, Uppsala University,
Box 480,\\ SE-751~06, Uppsala, SWEDEN, {\tt tobias.kildetoft\symbol{64}math.uu.se}

\noindent
Volodymyr Mazorchuk, Department of Mathematics, Uppsala University,
Box 480, SE-751~06, Uppsala, SWEDEN, {\tt mazor\symbol{64}math.uu.se}

\end{document}